\documentclass{article}

\usepackage{amsmath,amssymb,amsthm}
\usepackage{a4wide}
\usepackage{cite}


\newtheorem{theorem}{Theorem}

\newtheorem{lemma}[theorem]{Lemma}
\newtheorem{proposition}[theorem]{Proposition}

\theoremstyle{definition}
\newtheorem{definition}[theorem]{Definition}
\theoremstyle{remark}
\newtheorem{remark}[theorem]{Remark}


\begin{document}

\author{Amar Debbouche$^{a,b}$\\
{\tt amar$_{-}$debbouche@yahoo.fr}
\and Delfim F. M. Torres$^{b}$\\
{\tt delfim@ua.pt}}

\title{Approximate Controllability of Fractional Delay Dynamic Inclusions 
with Nonlocal Control Conditions\thanks{This is a preprint of a paper whose 
final and definite form will be published in \emph{Applied Mathematics and Computation}, 
ISSN 0096-3003 (see {\tt http://www.sciencedirect.com/science/journal/00963003}). 
Submitted 12/Feb/2013; Revised 04/May/2014; Accepted 25/May/2014.}}

\date{$^{a}$Department of Mathematics, Guelma University, Guelma, Algeria\\[0.3cm]
$^{b}$CIDMA--Center for Research and Development in Mathematics and Applications,\\
Department of Mathematics, University of Aveiro, 3810-193 Aveiro, Portugal}

\maketitle


\begin{abstract}
We introduce a nonlocal control condition
and the notion of approximate controllability
for fractional order quasilinear control inclusions.
Approximate controllability of a fractional
control nonlocal delay quasilinear functional differential inclusion
in a Hilbert space is studied. The results are obtained by using the fractional
power of operators, multi-valued analysis,
and Sadovskii's fixed point theorem. Main result gives an
appropriate set of sufficient conditions for the considered system
to be approximately controllable. As an example, a fractional partial
nonlocal control functional differential inclusion is considered.

\bigskip

\noindent \textbf{2010 Mathematics Subject Classification:} 26A33; 34A60; 34G25; 93B05.

\smallskip

\noindent \textbf{Keywords:}
approximate controllability; control theory; multivalued maps;
fractional dynamic inclusions; fractional power;
fixed points; semigroup theory.
\end{abstract}


\section{Introduction}

We are concerned with the fractional delay quasilinear control inclusion
\begin{equation}
\label{eq:1.1}
D^{\alpha}_{t}[u(t)-g(t, u(\sigma(t)))]\in Au(t)
+\int_{0}^{t}f(t, s, B_{1}\mu_{1}(\delta(s)))ds
\end{equation}
subject to the nonlocal control condition
\begin{equation}
\label{eq:1.2}
u(0)+h(u(t))=B_{2}\mu_{2}(t)+u_{0},
\end{equation}
where the unknown $u(\cdot)$ takes its values in a Hilbert space $H$
with norm $\Vert\cdot\Vert$, $D^{\alpha}_{t}$ is the Caputo fractional
derivative with $0<\alpha\leq1$ and $t\in J=[0,a]$. Let $A$ be a closed linear
operator defined on a dense domain $D(A)$ in $H$ into $H$ that generates
an analytic semigroup $Q(t)$, $t\geq 0$, of bounded linear operators on $H$
and $u_{0}\in D(A)$. We assume that $\lbrace B_{i}: U\rightarrow H,~ i=1,2\rbrace$
is a family of bounded linear operators, the control functions $\mu_{i}$, $i=1,2$,
belong to the space $L^{2}(J, U)$, a Hilbert space of admissible control functions
with $U$ as Hilbert space, and $\sigma, \delta: J\rightarrow J^{\prime}$ are delay arguments,
$J^{\prime}=[0, t]$. It is also assumed that $g: J\times H\rightarrow H$
and $h: C(J^{\prime}: H)\rightarrow H$ are given abstract functions and
$f: \Delta\times H\rightarrow H$ is a multi-valued map,
$\Delta=\lbrace (t, s): 0\leq s\leq t\leq a\rbrace$.

Three centuries ago, fractional calculus (i.e., the calculus of non-integer
order derivatives and integrals) has been dealt almost by mathematicians only.
During the past decades, this subject and its potential applications have gained
a lot of importance, mainly because fractional calculus has become a powerful tool
with more accurate and successful results in modeling several complex phenomena
in numerous seemingly diverse and widespread fields of science and engineering
\cite{AMA.5,AMA.10,AMA.13,AMA.17,AMA.20,AMA.29,AMA.31,AMA.32}.
It was found that various, especially interdisciplinary applications, can be elegantly
modeled with the help of fractional derivatives. Several authors have demonstrated applications
in the frequency dependent damping behavior of viscoelastic materials \cite{AMA.3,AMA.4},
dynamics of interfaces between nanoparticles and substrates \cite{AMA.9},
the nonlinear oscillation of earthquakes \cite{AMA.19}, bioengineering \cite{AMA.27},
continuum and statistical mechanics \cite{AMA.28}, signal processing \cite{AMA.34},
filter design, robotics and circuit theory \cite{AMA.39}. Fractional differential
equations provide an excellent instrument for the description of memory
and hereditary properties of various materials and processes \cite{AMA.37}.

In control theory, one of the most important qualitative aspects of a dynamical control system is controllability.
The problem of controllability consists to show the existence of a control function that steers the solution
of the system from its initial state to a final state, where the initial and final states may vary over
the entire space. A large class of scientific and engineering problems is modeled by partial differential
equations, integral equations or coupled ordinary and partial differential, integrodifferential equations,
which arise in problems connected with heat-flow in materials with memory, viscoelasticity and many other
physical phenomena. So it becomes important to study controllability results of such systems using
available techniques. The concept of controllability plays a major role in finite-dimensional control theory,
so that it is natural to try to generalize it to infinite dimensions \cite{AMA.38}. Moreover, the exact
controllability for semilinear fractional order systems, when the nonlinear term is independent
of the control function, is proved by assuming that the controllability operator has an induced inverse
on a quotient space, see for example \cite{AMA.11,AMA.12}. However, if the semigroup associated with
the system is compact, then the controllability operator is also compact and hence the induced inverse
does not exist because the state space is infinite dimensional \cite{AMA.48}.
Thus, the concept of exact controllability is too strong and
has limited applicability, while approximate controllability
is a weaker concept completely adequate in applications.

In recent years, attention has been paid to establish sufficient conditions for the existence
and controllability of (fractional) differential equations and inclusions, see, for instance,
\cite{AMA.1,AMA.2,AMA.7,AMA.8,AMA.26,AMA.40,AMA.49}. Ntouyas and O'Regan \cite{AMA.33} studied
existence results for semilinear neutral functional differential inclusions, Fu \cite{AMA.18}
established approximate controllability for neutral nonlocal impulsive differential inclusions
and Yan \cite{AMA.50,AMA.51} investigated the question of approximate controllability
of both fractional neutral functional differential equations and fractional integro-differential inclusions
with state-dependent delays. For more works about the approximate controllability for fractional systems,
we refer the reader to \cite{MR3172421,AMA.41,AMA.42,AMA.43,AMA.44,AMA.45,AMA.46},
see also Kumar and Sukavanam \cite{AMA.24,AMA.47}. However, in the mentioned papers,
the control function is located only in the inhomogeneous part of the evolution system.
For this reason, and motivated by this fact, we construct here two control functions:
the first control depends on the multi-valued map and the other with the nonlocal condition.
In order to realize this new complex form, we introduce here the study of approximate controllability
for a class of fractional delay dynamic inclusions with nonlocal control conditions.

The paper is organized as follows. In Section~\ref{sec:2}, we review some essential facts from fractional calculus,
multi-valued analysis and semigroup theory, that are used to obtain our main results. In Section~\ref{sec:3},
we state and prove existence and approximate controllability results for the fractional control system
\eqref{eq:1.1}--\eqref{eq:1.2}. Finally, in Section~\ref{sec:4}, as an example, a fractional partial dynamical
differential inclusion with a nonlocal control condition is considered. We end with Section~\ref{sec:conc}
of conclusions and some possible future directions of research.


\section{Preliminaries}
\label{sec:2}

In this section we give some basic definitions, notations, propositions and lemmas,
which will be used throughout the work. In particular, we state main properties
of fractional calculus \cite{AMA.23,AMA.30,AMA.37}, elementary principles
of multi-valued analysis \cite{AMA.15,AMA.22}, and well known facts
in semigroup theory \cite{AMA.21,AMA.36,AMA.52}.

\begin{definition}
\label{Definition:2.1}
The fractional integral of order $\alpha>0$ of a function $f\in L^{1}([a,b],\mathbb{R}^{+})$ is given by
$$
I^{\alpha}_{a}f(t)=\frac{1}{\Gamma(\alpha)}\int_{a}^{t}(t-s)^{\alpha-1}f(s)ds,
$$
where $\Gamma$ is the gamma function. If $a=0$, we can write
$I^{\alpha}f(t) := I^{\alpha}_{0}f(t) = (g_{\alpha}*f)(t)$, where
\begin{equation}
\label{f:g}
g_{\alpha}(t):=\left\{
\begin{array}{ll}
\frac{1}{\Gamma(\alpha)}t^{\alpha-1},& \mbox{$t>0$},\\
0, & \mbox{$t\leq 0$},
\end{array}\right.
\end{equation}
and, as usual, $*$ denotes the convolution of functions.
\end{definition}

\begin{remark}
For function \eqref{f:g}, one has
$\lim\limits_{\alpha\rightarrow 0}g_{\alpha}(t)=\delta(t)$
with $\delta$ the delta Dirac function.
\end{remark}

\begin{definition}
\label{Definition:2.2}
The Riemann--Liouville fractional derivative of order
$n-1<\alpha<n$, $n\in \mathbb{N}$, for a
function $f\in C([0, \infty))$ is given by
$$
^{L}D^{\alpha}f(t)=\frac{1}{\Gamma(n-\alpha)}\frac{d^{n}}{dt^{n}}
\int_{0}^{t}\frac{f(s)}{(t-s)^{\alpha+1-n}}ds,
\quad t>0.
$$
\end{definition}

\begin{definition}
\label{Definition:2.3}
The Caputo fractional derivative of order $n-1<\alpha<n$, $n\in \mathbb{N}$,
for a function $f\in C^{n-1}([0,\infty))$ is given by
$$
^{C}D^{\alpha}f(t) = {^{L}D}^{\alpha}\left(f(t)
-\sum\limits_{k=0}^{n-1}\frac{t^{k}}{k!}f^{(k)}(0)\right),
\quad t>0.
$$
\end{definition}

\begin{remark}
\label{Remark:2.1}
The following properties hold (see, e.g., \cite{AMA.37}):
\begin{itemize}
\item[(i)] If $f\in C^{n}([0, \infty))$, then
$$
^{C}D^{\alpha}f(t)=\frac{1}{\Gamma(n-\alpha)}
\int_{0}^{t}\frac{f^{(n)}(s)}{(t-s)^{\alpha+1-n}}ds
=I^{n-\alpha}f^{n}(t), \quad t>0, \quad n-1<\alpha<n,
\quad n\in \mathbb{N}.
$$
\item[(ii)] The Caputo derivative of a constant is equal to zero.
\item[(iii)] If $f$ is an abstract function with values in $H$,
then the integrals that appear in
Definitions~\ref{Definition:2.1}--\ref{Definition:2.3}
are taken in Bochner's sense.
\end{itemize}
\end{remark}

According to previous definitions, it is suitable to rewrite problem
\eqref{eq:1.1}--\eqref{eq:1.2} as the equivalent integral inclusion
\begin{multline}
\label{eq:2.1}
u(t)\in B_{2}\mu_{2}(t)+u_{0}-h(u(t))-g(0, u(\sigma(0)))+g(t, u(\sigma_{1}(t)))\\
+\frac{1}{\Gamma(\alpha)}\int_{0}^{t}(t-s)^{\alpha-1}\left[Au(s)
+\int_{0}^{s}f(s, \eta, B_{1}\mu_{1}(\delta(\eta)))d\eta\right]ds,
\end{multline}
provided the integral in \eqref{eq:2.1} exists. Before formulating the definition
of mild solution to \eqref{eq:1.1}--\eqref{eq:1.2},
we first give the following notations, corollaries and lemmas.

Let $(X, \Vert\cdot\Vert)$ be a Banach space, $C(J, X)$ denote the Banach space
of continuous functions from $J$ into $X$ with the norm
$\Vert u\Vert_{J}=\sup \lbrace\Vert u(t)\Vert: t\in J\rbrace$, and let
$\mathcal{L}(X)$ be the Banach space of bounded linear operators from $X$ to $X$
with the norm $\Vert G\Vert_{\mathcal{L}(X)}
=\sup\lbrace\Vert G(u)\Vert: \Vert u\Vert=1\rbrace$. We also denote:
\begin{itemize}
\item $P(X)=\lbrace Y\in 2^{X}: Y \neq \emptyset\rbrace$,
\item $P_{cl}X=\lbrace Y\in P(X), Y$ is closed$\rbrace$,
\item $P_{b}X=\lbrace Y\in P(X), Y$ is bounded$\rbrace$,
\item $P_{c}X=\lbrace Y\in P(X), Y$ is convex$\rbrace$,
\item $P_{cp}X=\lbrace Y\in P(X), Y$ is compact$\rbrace$.
\end{itemize}
The following results on multi-valued analysis are useful.

\begin{proposition}[See \cite{AMA.15}]
\label{Proposition:2.1}
\begin{itemize}
\item[(1)] A measurable function $u: J\rightarrow X$ is Bochner integrable
if and only if $\Vert u\Vert$ is Lebesgue integrable.
\item[(2)] A multi-valued map $F: X\rightarrow 2^{X}$ is said to be convex-valued
(closed-valued) if $F(u)$ is convex (closed) for all $u\in X$; is said to be bounded
on bounded sets if $F(B)=\bigcup\limits_{u\in B}F(u)$ is bounded in $X$ for all $B\in P_{b}(X)$.
\item[(3)] A map $F$ is said to be upper semi-continuous (u.s.c.) on $X$ if for each
$u_{0}\in X$ the set $F(u_{0})$ is a nonempty closed subset of $X$, and if for each open
subset $\Omega$ of $X$ containing $F(u_{0})$, there exists an open neighborhood
$\bigtriangledown$ of $u_{0}$ such that $F(\bigtriangledown)\subseteq\Omega$.
\item [(4)] A map $F$ is said to be completely continuous if $F(B)$ is relatively compact
for every $B\in P_{b}(X)$. If the multi-valued map $F$ is completely continuous
with nonempty compact values, then $F$ is u.s.c. if and only if $F$ has a closed
graph, i.e., $u_{n}\rightarrow u, y_{n}\rightarrow y, y_{n}\in F(u_{n})$ imply $y\in F(u)$.
We say that $F$ has a fixed point if there is $u\in X$ such that $u\in F(u)$.
\item [(5)] A multi-valued map $F: J\rightarrow P_{cl}(X)$ is said to be measurable if for
each $u\in X$ the function $y: J\rightarrow \mathbb{R}$ defined by
$y(t)=d(u, F(t))=\inf \lbrace \Vert u-z\Vert, z\in F(t)\rbrace$ is measurable.
\item [(6)] A multi-valued map $F: X\rightarrow 2^{X}$ is said to be condensing if for any
bounded subset $B\subset X$ with $\beta(B)\neq 0$ we have $\beta(F(B))<\beta(B)$, where
$\beta(\cdot)$ denotes the Kuratowski measure of non-compactness defined as follows:
$\beta(B):=\inf \lbrace d>0: B$ can be covered by a finite number of balls of radius $d\rbrace$.
\end{itemize}
\end{proposition}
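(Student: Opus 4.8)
The plan is to dispose of the purely definitional items first and then concentrate on the two statements that carry genuine mathematical content, namely the Bochner integrability criterion in part~(1) and the closed-graph characterization of upper semicontinuity in part~(4). Items~(2), (3), (5) and~(6) merely record terminology (convex/closed valued, u.s.c., measurable, condensing, and the Kuratowski measure $\beta$), so there is nothing to prove for them; I would simply adopt these as the working definitions.

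For part~(1), I would argue both implications through simple-function approximation. If $u$ is Bochner integrable, then by definition there is a sequence of simple functions $u_n$ with $\int_J \|u_n - u\|\,dt \to 0$; since each $\|u_n\|$ is Lebesgue integrable and $\bigl|\,\|u_n\| - \|u\|\,\bigr| \le \|u_n - u\|$, the scalar functions $\|u_n\|$ converge to $\|u\|$ in $L^1(J)$, whence $\|u\|$ is Lebesgue integrable. Conversely, if $u$ is measurable with $\|u\|$ Lebesgue integrable, then $u$ has essentially separable range (Pettis), so one can choose simple functions $u_n \to u$ pointwise almost everywhere satisfying $\|u_n\| \le 2\|u\|$; the bound $\|u_n - u\| \le 3\|u\|$ then dominates an integrable function, and the dominated convergence theorem gives $\int_J \|u_n - u\|\,dt \to 0$, so that $u$ is Bochner integrable.

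For part~(4), the nontrivial claim is the equivalence, for a completely continuous map with nonempty compact values, between upper semicontinuity and having a closed graph. For the direction \emph{u.s.c.\ $\Rightarrow$ closed graph}, I would take $u_n \to u$, $y_n \to y$ with $y_n \in F(u_n)$ and suppose $y \notin F(u)$; since $F(u)$ is compact one can separate $y$ from $F(u)$ by an open set $\Omega \supseteq F(u)$ with $y \notin \overline{\Omega}$, and u.s.c.\ supplies a neighbourhood $V$ of $u$ with $F(V) \subseteq \Omega$, forcing $y_n \in \Omega$ for large $n$ and hence $y \in \overline{\Omega}$, a contradiction. The reverse direction is where I expect the real work: assuming the graph is closed but u.s.c.\ fails at some $u_0$, there is an open $\Omega \supseteq F(u_0)$ and sequences $u_n \to u_0$, $y_n \in F(u_n) \setminus \Omega$. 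Here complete continuity enters decisively—the set $\{u_n\} \cup \{u_0\}$ is bounded, so $\{y_n\}$ lies in a relatively compact set and admits a convergent subsequence $y_{n_k} \to y$; the closed-graph hypothesis then gives $y \in F(u_0) \subseteq \Omega$, while $y$ is a limit of points of the closed set $X \setminus \Omega$, so $y \notin \Omega$, a contradiction. The extraction of a convergent subsequence via compactness is the crux of the whole proposition, since without complete continuity the closed-graph property alone does not force upper semicontinuity.
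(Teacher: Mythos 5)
Your proposal is mathematically sound, but note that the paper itself offers no proof of this proposition at all: it is stated with the tag ``See \cite{AMA.15}'' and deferred entirely to Deimling's monograph, so there is no argument in the paper to compare yours against. Your reading of the statement is the right one --- items (2), (3), (5) and (6) are pure definitions, and the only content lies in the Bochner criterion of part (1) and the equivalence in part (4). Your treatment of both is the standard one found in the cited literature: simple-function approximation plus dominated convergence for (1) (your appeal to Pettis is only needed if ``measurable'' is read as weakly measurable; for strongly measurable $u$ the essentially separable range is automatic), and for (4) the separation of $y$ from the compact set $F(u)$ in the forward direction together with the subsequence extraction via complete continuity in the reverse direction. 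Your closing remark that complete continuity is genuinely needed --- a closed graph alone does not imply upper semicontinuity (consider $F(x)=\lbrace 1/x\rbrace$ for $x\neq 0$, $F(0)=\lbrace 0\rbrace$ on $\mathbb{R}$) --- correctly identifies the crux. In short: you supplied a proof where the authors supplied a citation, and the proof you supplied is correct.
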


The following results are motivated by \cite{AMA.25}.

\begin{proposition}[Cf. \cite{AMA.25}]
\label{Proposition:2.2}
Let $X$ be a Banach space and let $f: \Delta\times X\rightarrow P_{b, cl, c}(X)$
satisfy the following conditions:
\begin{enumerate}
\item for each $\mu_{1}\in L^{2}(J^{\prime}, U)$,
$(t, s, B_{1}\mu_{1})\rightarrow f(t, s, B_{1}\mu_{1})$ is measurable on $\Delta$;
\item for each $(t, s)\in \Delta$, $(t, s, B_{1}\mu_{1})\rightarrow f(t, s, B_{1}\mu_{1})$
is u.s.c. with respect to $B_{1}\mu_{1}$;
\item for each fixed $\mu_{1}\in L^{2}(J^{\prime}, U)$, the set
$$
S_{f, \mu_{1}}=\left\lbrace v\in L^{1}(J^{\prime}, X):
B_{1}\mu_{1}(t)+v(\delta(t))\in \int_{0}^{t}f(t, s, B_{1}\mu_{1}(\delta(s)))ds,
\,  \text{ a.e. } t\in J\right\rbrace
$$
is nonempty.
\end{enumerate}
Also, let $P$ be a linear continuous mapping from $L^{1}(J^{\prime}, X)$ to $L^{2}(J^{\prime}, U)$.
Then the operator
\begin{equation*}
\begin{split}
P\circ S_{f, \mu_{1}}: L^{2}(J^{\prime}, U) &\longrightarrow P_{cp, c}(L^{2}(J^{\prime}, U))\\
\mu_{1} &\longmapsto P\circ S_{f}(\mu_{1}) =: P(S_{f, \mu_{1}})
\end{split}
\end{equation*}
is a closed graph operator.
\end{proposition}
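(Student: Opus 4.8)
The plan is to follow the classical Lasota--Opial scheme for closed graph operators, adapted here to the integral selection set $S_{f,\mu_1}$. Suppose we are given sequences $\mu_1^n\to\mu_1^*$ and $w_n\to w_*$ in $L^2(J',U)$ with $w_n\in(P\circ S_{f,\mu_1^n})(\mu_1^n)$; we must produce $v_*\in S_{f,\mu_1^*}$ with $P(v_*)=w_*$, which is precisely the closed graph property. By definition of the composed operator, for each $n$ I would fix a selection $v_n\in S_{f,\mu_1^n}$ with $P(v_n)=w_n$, and the whole argument reduces to passing to the limit in $n$.

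First I would establish weak compactness of the selections. Since $\{B_1\mu_1^n\}$ is convergent, hence bounded, and $f$ takes values in $P_{b,cl,c}(X)$, the standing integrable-boundedness hypothesis on $f$ makes $\{v_n\}$ bounded in $L^1(J',X)$; invoking the Dunford--Pettis theorem together with the resulting uniform integrability, I would extract a subsequence (not relabelled) with $v_n\rightharpoonup v_*$ weakly in $L^1(J',X)$.

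To identify the weak limit as an admissible selection, I would apply Mazur's lemma: suitable convex combinations $\tilde v_k=\sum_j\lambda_j^k v_j$ converge strongly to $v_*$ in $L^1(J',X)$, and hence, along a further subsequence, $\tilde v_k(t)\to v_*(t)$ for a.e.\ $t$. Using the continuity of $B_1$ (so that $B_1\mu_1^n\to B_1\mu_1^*$), the upper semicontinuity of $f$ in its third argument from hypothesis~(2), and the convexity and closedness of its values, I would pass to the limit in the defining inclusion $B_1\mu_1^n(t)+v_n(\delta(t))\in\int_0^t f\bigl(t,s,B_1\mu_1^n(\delta(s))\bigr)\,ds$ to obtain $B_1\mu_1^*(t)+v_*(\delta(t))\in\int_0^t f\bigl(t,s,B_1\mu_1^*(\delta(s))\bigr)\,ds$ a.e.\ on $J$, so that $v_*\in S_{f,\mu_1^*}$. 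Finally, since $P$ is linear and continuous it is weakly continuous, whence $P(v_n)\rightharpoonup P(v_*)$; combined with $P(v_n)=w_n\to w_*$ this forces $P(v_*)=w_*$ and finishes the proof.

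The hard part will be this last limiting step inside the integral inclusion. One must show that the Aumann-type integral multifunction $\mu_1\mapsto\int_0^t f(t,s,B_1\mu_1(\delta(s)))\,ds$ inherits an upper semicontinuity (closed graph) property from the merely pointwise u.s.c.\ of $f$; this is where the convexity of the values is essential, since it allows the Convergence Theorem for multifunctions to be applied to the Mazur combinations, and where the delay substitution $s\mapsto\delta(s)$ and the transition from weak $L^1$ convergence to a.e.\ convergence must be handled with care.
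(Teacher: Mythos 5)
The paper itself offers no proof of Proposition~\ref{Proposition:2.2}: it is stated as a known result, a variant of the Lasota--Opial closed-graph lemma, and simply cited to \cite{AMA.25}. So there is no in-paper argument to compare against; your proposal has to stand on its own, and in outline it follows exactly the classical Lasota--Opial route (extract selections, get weak $L^{1}$ compactness, use Mazur plus convexity and upper semicontinuity to identify the limit as a selection, then use weak continuity of $P$). That is the right strategy, and your final step identifying $P(v_{*})=w_{*}$ from $P(v_{n})\rightharpoonup P(v_{*})$ and $P(v_{n})=w_{n}\rightarrow w_{*}$ is correct.

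There are, however, two concrete gaps. First, your weak-compactness step invokes a ``standing integrable-boundedness hypothesis on $f$'' that is not among conditions (1)--(3) of the proposition; and even granting a uniform bound (which in the paper comes only later, from (H$_2$)(ii)), boundedness plus uniform integrability does \emph{not} yield relative weak compactness in $L^{1}(J^{\prime},X)$ for a general Banach space $X$ --- the vector-valued Dunford--Pettis theorem additionally requires the sets $\lbrace v_{n}(t)\rbrace_{n}$ to be relatively weakly compact in $X$ for a.e.\ $t$ (Diestel--Ruess--Schachermayer/\"Ulger). This is harmless in the paper's application, where $X=H$ is a Hilbert space and the values are uniformly bounded, but as a proof of the proposition as stated it does not go through. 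Second, the step you yourself flag as ``the hard part'' is precisely the content of the lemma and is only described, not carried out: the selections $v_{j}$ entering a Mazur combination satisfy inclusions with \emph{different} parameters $\mu_{1}^{j}$, so before convexity can be used you must show that for large $j$ the sets $\int_{0}^{t}f(t,s,B_{1}\mu_{1}^{j}(\delta(s)))ds-B_{1}\mu_{1}^{j}(t)$ lie in an $\epsilon$-neighborhood of the limiting set, i.e.\ establish upper semicontinuity of the Aumann integral multifunction under $L^{2}$-convergence of $\mu_{1}^{j}$ (which only gives a.e.\ convergence of $B_{1}\mu_{1}^{j}(\delta(s))$ along a subsequence, so a Fatou-type lemma for set-valued integrals is needed), and you must also justify transferring the a.e.\ convergence of $\tilde v_{k}$ through the substitution $t\mapsto\delta(t)$, which requires that $\delta^{-1}$ maps null sets to null sets --- a property not guaranteed by (H$_4$). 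Until these points are supplied, the proposal is a correct plan rather than a proof.
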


\begin{lemma}[See \cite{AMA.15}]
\label{Lemma:2.3}
Let $\Omega$ be a bounded, convex, and closed set in the
Banach space $X$ and $F: \Omega\rightarrow 2^{\Omega}\setminus\lbrace \emptyset\rbrace$
be a u.s.c. condensing multi-valued map. If for every $u\in \Omega$, $F(u)$ is a closed
and convex set in $\Omega$, then $F$ has a fixed point in $\Omega$.
\end{lemma}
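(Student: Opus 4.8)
The plan is to reduce the problem to the compact, convex-valued setting and then invoke the classical Bohnenblust--Karlin fixed point theorem (the set-valued analogue of Schauder's theorem, valid for upper semi-continuous maps with nonempty compact convex values on a compact convex set). Since $\Omega$ is only bounded rather than compact, the first and central task is to manufacture, inside $\Omega$, a nonempty compact convex subset $K_{0}$ that is invariant under $F$ in the sense that $F(K_{0})\subseteq K_{0}$; the condensing hypothesis is exactly what makes such a set available.

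To build $K_{0}$, I would fix an arbitrary point $u_{0}\in\Omega$ and consider the family $\Sigma$ of all closed convex subsets $K\subseteq\Omega$ with $u_{0}\in K$ and $F(K)\subseteq K$, where as usual $F(K)=\bigcup_{u\in K}F(u)$. This family is nonempty because $\Omega$ itself belongs to it. Setting $K_{0}=\bigcap_{K\in\Sigma}K$, one checks that $K_{0}$ is again closed and convex, contains $u_{0}$, and satisfies $F(K_{0})\subseteq K_{0}$, so $K_{0}\in\Sigma$ and is the smallest member of $\Sigma$. The key refinement is to put $K_{1}=\overline{\mathrm{conv}}\bigl(F(K_{0})\cup\{u_{0}\}\bigr)$. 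Since $F(K_{0})\subseteq K_{0}$ and $u_{0}\in K_{0}$ with $K_{0}$ closed convex, we get $K_{1}\subseteq K_{0}$; conversely $K_{1}\subseteq K_{0}$ gives $F(K_{1})\subseteq F(K_{0})\subseteq K_{1}$, so $K_{1}\in\Sigma$ and hence $K_{0}\subseteq K_{1}$. Therefore $K_{0}=K_{1}=\overline{\mathrm{conv}}\bigl(F(K_{0})\cup\{u_{0}\}\bigr)$.

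Compactness of $K_{0}$ then follows from the Kuratowski measure $\beta$ introduced in Proposition~\ref{Proposition:2.1}(6). Using the standard properties that $\beta$ is unchanged under passage to the closed convex hull and under adjoining a single point, and that $F(K_{0})\subseteq\Omega$ is bounded so that $\beta(F(K_{0}))$ is defined, I obtain $\beta(K_{0})=\beta\bigl(\overline{\mathrm{conv}}(F(K_{0})\cup\{u_{0}\})\bigr)=\beta(F(K_{0}))$. If $\beta(K_{0})\neq 0$, the condensing property yields $\beta(F(K_{0}))<\beta(K_{0})$, contradicting the previous equality. Hence $\beta(K_{0})=0$, so $K_{0}$ is relatively compact, and being closed it is compact.

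Finally, the restriction $F|_{K_{0}}\colon K_{0}\to 2^{K_{0}}$ is well defined (because $F(K_{0})\subseteq K_{0}$), remains upper semi-continuous, and inherits nonempty closed convex values; on the compact convex set $K_{0}$ these are precisely the hypotheses of the Bohnenblust--Karlin theorem, which produces $u\in K_{0}\subseteq\Omega$ with $u\in F(u)$. The main obstacle is the middle step: organizing the family $\Sigma$ and verifying the double inclusion $K_{0}=K_{1}$ together with the measure-theoretic identity $\beta(K_{0})=\beta(F(K_{0}))$, since this is where the condensing hypothesis must be converted into genuine compactness. Once $K_{0}$ is known to be compact, the conclusion is immediate from the set-valued Schauder-type theorem.
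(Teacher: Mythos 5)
The paper offers no proof of this lemma at all: it is quoted verbatim from Deimling's monograph \cite{AMA.15} as a known multivalued Sadovskii-type fixed point theorem, so there is no in-paper argument to compare yours against. Your reconstruction is the standard proof of that theorem and, as far as I can check, it is correct and complete. The chain of reasoning is sound: the family $\Sigma$ is nonempty because $\Omega\in\Sigma$; the minimal element $K_{0}$ satisfies the double inclusion with $K_{1}=\overline{\mathrm{conv}}\bigl(F(K_{0})\cup\{u_{0}\}\bigr)$ exactly as you argue; the invariance of the Kuratowski measure under closed convex hulls and under adjoining a point forces $\beta(K_{0})=\beta(F(K_{0}))$, which the condensing hypothesis turns into $\beta(K_{0})=0$; and $K_{0}$, being a closed (hence complete) totally bounded subset of the Banach space $X$, is compact. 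The final appeal to Bohnenblust--Karlin is legitimate because the values $F(u)$ for $u\in K_{0}$ are closed subsets of the compact set $K_{0}$ and therefore automatically compact, which is the form of the value hypothesis that theorem actually needs. In short, where the authors delegate the entire content to a citation, you have supplied the genuine argument; the only thing I would add for completeness is an explicit statement of the two measure-of-noncompactness properties you invoke, since the paper's Proposition~\ref{Proposition:2.1}(6) defines $\beta$ but does not record that $\beta(\overline{\mathrm{conv}}\,B)=\beta(B)$ or that $\beta(B\cup\{x\})=\beta(B)$.
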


Throughout the paper, $(H, \Vert \cdot\Vert)$ is a separable Hilbert space.
If $A: D(A)\subset H\rightarrow H$ is the infinitesimal generator of a compact analytic
semigroup of uniformly bounded linear operators $Q(\cdot)$, then there exists a constant
$M\geq1$ such that $\Vert Q(t)\Vert\leq M$ for $t\geq0$. Without loss of generality,
we assume that $0\in \rho(A)$, the resolvent set $A$. Then it is possible to define
the fractional power $A^{q}$, for $0<q\leq1$, as a closed linear operator on its domain
$D(A^{q})$ with inverse $A^{-q}$. Furthermore, the subspace $D(A^{q})$ is dense in $H$
and the expression $\Vert u\Vert_{q}=\Vert A^{q}u\Vert, u\in D(A^{q})$ defines a norm on $D(A^{q})$.
Hereafter, we denote by $H_{q}$ the Banach space $D(A^{q})$ normed with $\Vert u\Vert_{q}$.

\begin{lemma}[See \cite{AMA.36}]
\label{Lemma:2.4}
Let $A$ be the infinitesimal generator of an analytic semigroup $Q(t)$.
If $0\in \rho(A)$, then
\begin{itemize}
\item [(a)] $Q(t): H\rightarrow D(A^{q})$ for every $t>0$ and $q\geq0$;
\item [(b)] $Q(t)A^{q}u=A^{q}Q(t)u$ for every $u\in D(A^{q})$;
\item [(c)] the operator $A^{q}Q(t)$ is bounded and
$\Vert A^{q}Q(t)\Vert\leq M_{q}t^{-q}e^{-\omega t}$ for every $t>0$;
\item [(d)] if $0<q\leq1$ and $u\in D(A^{q})$, then
$\Vert Q(t)u-u\Vert\leq C_{q}t^{q}\Vert A^{q}u\Vert$.
\end{itemize}
\end{lemma}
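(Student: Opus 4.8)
The plan is to establish (a)--(d) in order, relying on the standard theory of analytic semigroups and of fractional powers of sectorial operators; the hypothesis $0\in\rho(A)$ is exactly what guarantees that $A^{q}$ and $A^{-q}$ are well defined for $q>0$, with $A^{-q}$ bounded and $A^{-q}=\frac{1}{\Gamma(q)}\int_{0}^{\infty}s^{q-1}Q(s)\,ds$. For (a) I would invoke the smoothing property of analytic semigroups: for each $t>0$ the operator $Q(t)$ maps $H$ into $\bigcap_{n\in\mathbb{N}}D(A^{n})$. Since the fractional power domains are nested ($D(A^{n})\subseteq D(A^{q})$ whenever $n\geq q$), this gives $Q(t)H\subseteq D(A^{q})$ for every $q\geq0$. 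For (b) I would first note that $Q(t)$ commutes with the resolvent $R(\lambda,A)$, hence with $A^{-q}$ (an integral of operators $Q(s)$ that all commute with $Q(t)$ by the semigroup law); because $A^{q}$ is the inverse of the injective bounded operator $A^{-q}$, the commutation transfers to $A^{q}$ on its domain, yielding $Q(t)A^{q}u=A^{q}Q(t)u$ for $u\in D(A^{q})$.

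Part (c) is where I expect the genuine work, and it is the main obstacle. Boundedness of $A^{q}Q(t)$ for $t>0$ already follows from (a); the point is the quantitative estimate. I would begin with the classical analytic-semigroup bound $\Vert AQ(t)\Vert\leq M_{1}t^{-1}e^{-\omega t}$, obtained from the Dunford representation $Q(t)=\frac{1}{2\pi i}\int_{\Gamma}e^{\lambda t}R(\lambda,A)\,d\lambda$ over a sectorial contour $\Gamma$ together with the resolvent bound $\Vert R(\lambda,A)\Vert\leq C/\vert\lambda\vert$: differentiating under the integral and using $AR(\lambda,A)=\lambda R(\lambda,A)-I$ produces the factor $t^{-1}$, while deforming $\Gamma$ into the left half-plane produces the decay $e^{-\omega t}$. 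To pass from $q=1$ to $0<q<1$, I would apply the moment inequality $\Vert A^{q}x\Vert\leq C\Vert Ax\Vert^{q}\Vert x\Vert^{1-q}$ with $x=Q(t)y$:
\[
\Vert A^{q}Q(t)y\Vert\leq C\Vert AQ(t)y\Vert^{q}\Vert Q(t)y\Vert^{1-q}
\leq C\bigl(M_{1}t^{-1}e^{-\omega t}\bigr)^{q}M^{1-q}\Vert y\Vert ,
\]
which is $\Vert A^{q}Q(t)\Vert\leq M_{q}t^{-q}e^{-\omega t}$ after absorbing constants and replacing the decay rate by a common $\omega>0$ valid for all $q\in(0,1]$.

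Finally, (d) drops out of (a)--(c). For $u\in D(A^{q})$ with $0<q\leq1$, property (a) makes $Q(s)u\in D(A)$ for $s>0$, so $s\mapsto Q(s)u$ is differentiable on $(0,t]$ with $\frac{d}{ds}Q(s)u=AQ(s)u=A^{1-q}Q(s)A^{q}u$ by (b) and the factorization $A=A^{1-q}A^{q}$. By (c) the integrand is bounded in norm by $M_{1-q}s^{q-1}\Vert A^{q}u\Vert$, which is integrable on $[0,t]$ because $q>0$; the fundamental theorem of calculus for the $H$-valued function $Q(\cdot)u$ then yields
\[
\Vert Q(t)u-u\Vert=\left\Vert\int_{0}^{t}A^{1-q}Q(s)A^{q}u\,ds\right\Vert
\leq M_{1-q}\Vert A^{q}u\Vert\int_{0}^{t}s^{q-1}\,ds
=\frac{M_{1-q}}{q}\,t^{q}\Vert A^{q}u\Vert ,
\]
so the constant $C_{q}=M_{1-q}/q$ works. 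The only delicate points are the contour estimates underlying (c) and the justification that $Q(\cdot)u$ is an antiderivative of its strong derivative despite the singularity at $s=0$, which is controlled precisely by the bound in (c).
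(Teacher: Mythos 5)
The paper offers no proof of this lemma: it is imported verbatim from Pazy's monograph \cite{AMA.36} (Theorem 6.13 of Chapter 2 there), so there is no in-paper argument to compare against. Your proposal is correct and reproduces essentially the standard proof from that source --- smoothing of analytic semigroups plus nesting of the domains $D(A^{n})$ for (a), commutation of $Q(t)$ with the Balakrishnan integral for $A^{-q}$ for (b), the bound $\Vert AQ(t)\Vert\leq M_{1}t^{-1}e^{-\omega t}$ combined with the moment inequality for (c), and the representation $Q(t)u-u=\int_{0}^{t}A^{1-q}Q(s)A^{q}u\,ds$ with the integrable singularity $s^{q-1}$ for (d) --- with the two delicate points (the contour estimate and the fundamental theorem of calculus at $s=0$) correctly identified and handled.
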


\begin{remark}
\label{Remark:2.2}
We note that:
\begin{itemize}
\item [(i)] $D(A^{q})$ is a Hilbert space with the norm
$\Vert u\Vert_{q}=\Vert A^{q}u\Vert$ for $u\in D(A^{q})$.
\item [(ii)] If $0<p\leq q\leq1$, then $D(A^{q})\hookrightarrow D(A^{p})$.
\item [(iii)] $A^{-q}$ is a bounded linear operator
in $H$ with $D(A^{q})=Im(A^{-q})$.
\end{itemize}
\end{remark}

Let us consider the set
$\Omega=\lbrace u: u\in C(J, H_{q}), q\in (0, 1)\rbrace$,
which is a Banach space with the norm
$\Vert u\Vert_{\Omega}=\sup_{t\in J}\Vert u(t)\Vert_{q}$.

\begin{definition}[Cf. \cite{AMA.14,AMA.16} and \cite{AMA.18,AMA.33,AMA.50,AMA.53}]
\label{Definition:2.4}
A state function $u(t)\in \Omega$ is called a mild solution of \eqref{eq:1.1}--\eqref{eq:1.2}
if $u(0)=B_{2}\mu_{2}(t)+u_{0}-h(u(t))$, the function
$(t-s)^{\alpha-1}AT_{\alpha}(t-s)g(s, u(\sigma(s)))$, $s\in J$, is integrable on $[0, t)$
for every $t\in J$, and for each control $\mu_{1}\in L^{2}(J, U)$ there exists a function
$v\in L^{1}(J^{\prime}, H)$ such that
$\displaystyle v(\delta(t))+B_{1}\mu_{1}(t)\in \int_{0}^{t}f(t, s, B_{1}\mu_{1}(\delta(s)))ds$
a.e. on $J$ and the following integral equation is satisfied:
\begin{multline}
\label{eq:2.2}
u(t)=S_{\alpha}(t)\left[B_{2}\mu_{2}(t)+u_{0}-h\left(u(t)\right)
-g\left(0, u(\sigma(0))\right)\right]+g\left(t, u(\sigma(t))\right)\\
+\int_{0}^{t}(t-s)^{\alpha-1}\left\lbrace AT_{\alpha}(t-s)g\left(s, u(\sigma(s))\right)
+T_{\alpha}(t-s)\left[v(\delta(s))+B_{1}\mu_{1}(s)\right]\right\rbrace ds,
\end{multline}
where
$$
S_{\alpha}(t)=\int_{0}^{\infty}\zeta_{\alpha}(\theta)Q(t^{\alpha}\theta)d\theta,
\quad T_{\alpha}(t)=\alpha\int_{0}^{\infty}\theta\zeta_{\alpha}(\theta)Q(t^{\alpha}\theta)d\theta,
$$
$$
\zeta_{\alpha}(\theta)=\frac{1}{\alpha}\theta^{-1-\frac{1}{\alpha}}
\varpi_{\alpha}(\theta^{-\frac{1}{\alpha}})\geq 0,
\quad \varpi_{\alpha}(\theta)
=\frac{1}{\pi}\sum_{n=1}^{\infty}(-1)^{n-1}\theta^{-\alpha n-1}
\frac{\Gamma(n\alpha+1)}{n!}\sin (n\pi\alpha), \theta\in (0, \infty),
$$
with $\zeta_{\alpha}$ the probability density function defined on $(0, \infty)$, that is,
$\zeta_{\alpha}(\theta)\geq 0$, $\theta\in(0, \infty)$, and
$\int_{0}^{\infty}\zeta_{\alpha}(\theta)d\theta=1$.
\end{definition}

\begin{lemma}[See \cite{AMA.50,AMA.53}]
\label{Lemma:2.5}
The operators $S_{\alpha}(t)$ and $T_{\alpha}(t)$ satisfy the following properties.
\begin{itemize}
\item[(a)] For any fixed $t\geq0$, $S_{\alpha}(t)$
and $T_{\alpha}(t)$ are linear and bounded operators, i.e.,
for any $u\in H$, $\Vert S_{\alpha}(t)u\Vert\leq M\Vert u\Vert$ and
$\Vert T_{\alpha}(t)u\Vert\leq \frac{M\alpha}{\Gamma (1+\alpha)}\Vert u\Vert$.
\item[(b)] $\lbrace S_{\alpha}(t), t\geq0\rbrace$ and $\lbrace T_{\alpha}(t), t\geq0\rbrace$
are strongly continuous, i.e., for $u\in H$ and $0\leq t_{1}<t_{2}\leq a$, we have
$\Vert S_{\alpha}(t_{2})u-S_{\alpha}(t_{1})u\Vert\rightarrow 0$ and
$\Vert T_{\alpha}(t_{2})u-T_{\alpha}(t_{1})u\Vert\rightarrow 0$ as $t_{1}\rightarrow t_{2}$.
\item[(c)] For every $t>0$, $S_{\alpha}(t)$ and $T_{\alpha}(t)$ are compact operators.
\item[(d)] For any $u\in H$, $p\in (0, 1)$ and $q\in (0, 1)$,
$AT_{\alpha}(t)u=A^{1-p}T_{\alpha}(t)A^{p}u$, $t\in J$, and
$$
\Vert A^{q}T_{\alpha}(t)\Vert \leq
\frac{\alpha M_{q}\Gamma(2-q)}{\Gamma(1+\alpha(1-q))}t^{-q\alpha},
\quad 0<t\leq a.
$$
\end{itemize}
\end{lemma}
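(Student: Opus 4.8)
The plan is to treat the four items in turn, in each case reducing the operator estimate to a moment integral of the density $\zeta_{\alpha}$ weighted against either the uniform bound $\Vert Q(s)\Vert\le M$ or the singular bound of Lemma~\ref{Lemma:2.4}(c). The single analytic fact I will use repeatedly is the moment identity $\int_{0}^{\infty}\theta^{\nu}\zeta_{\alpha}(\theta)\,d\theta=\Gamma(1+\nu)/\Gamma(1+\alpha\nu)$ for $\nu\ge 0$, which specializes to the normalization $\int_{0}^{\infty}\zeta_{\alpha}(\theta)\,d\theta=1$ already recorded in Definition~\ref{Definition:2.4} and to $\int_{0}^{\infty}\theta\,\zeta_{\alpha}(\theta)\,d\theta=1/\Gamma(1+\alpha)$. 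For (a), linearity is immediate from the linearity of $Q(t^{\alpha}\theta)$ and of the Bochner integral; pulling the norm inside the integral and using $\Vert Q(\cdot)\Vert\le M$ gives $\Vert S_{\alpha}(t)u\Vert\le M\Vert u\Vert\int_{0}^{\infty}\zeta_{\alpha}(\theta)\,d\theta=M\Vert u\Vert$, and likewise $\Vert T_{\alpha}(t)u\Vert\le M\alpha\Vert u\Vert\int_{0}^{\infty}\theta\,\zeta_{\alpha}(\theta)\,d\theta=\frac{M\alpha}{\Gamma(1+\alpha)}\Vert u\Vert$.

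For (b) I would fix $u\in H$ and let $t_{1}\to t_{2}$. For each fixed $\theta>0$ the map $t\mapsto t^{\alpha}\theta$ is continuous, so strong continuity of the semigroup yields $Q(t_{1}^{\alpha}\theta)u\to Q(t_{2}^{\alpha}\theta)u$ pointwise in $\theta$. The integrand of $S_{\alpha}(t_{2})u-S_{\alpha}(t_{1})u$ is dominated by $2M\Vert u\Vert\zeta_{\alpha}(\theta)$, which is integrable, so the dominated convergence theorem forces $\Vert S_{\alpha}(t_{2})u-S_{\alpha}(t_{1})u\Vert\to 0$; for $T_{\alpha}$ the dominating function is $2M\alpha\Vert u\Vert\theta\zeta_{\alpha}(\theta)$, integrable by the first-moment identity.

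For (c), which I expect to be the delicate point, a Bochner integral of compact operators need not be compact, so I would use a truncation argument. Fix $t>0$ and for $\varepsilon>0$ set $S_{\alpha}^{\varepsilon}(t)u=\int_{\varepsilon}^{\infty}\zeta_{\alpha}(\theta)Q(t^{\alpha}\theta)u\,d\theta$. Using the semigroup property $Q(t^{\alpha}\theta)=Q(t^{\alpha}\varepsilon)Q(t^{\alpha}(\theta-\varepsilon))$ for $\theta\ge\varepsilon$ factors $S_{\alpha}^{\varepsilon}(t)=Q(t^{\alpha}\varepsilon)R_{\varepsilon}$ with $R_{\varepsilon}$ bounded (indeed $\Vert R_{\varepsilon}\Vert\le M$); since $t^{\alpha}\varepsilon>0$ and $Q$ is compact, $Q(t^{\alpha}\varepsilon)$ is compact and hence so is $S_{\alpha}^{\varepsilon}(t)$. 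Finally $\Vert S_{\alpha}(t)-S_{\alpha}^{\varepsilon}(t)\Vert\le M\int_{0}^{\varepsilon}\zeta_{\alpha}(\theta)\,d\theta\to 0$ as $\varepsilon\to 0$, so $S_{\alpha}(t)$ is a uniform limit of compact operators and is therefore compact; the same truncation, now carrying the extra weight $\alpha\theta$, handles $T_{\alpha}(t)$.

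For (d), the commutation identity follows from Lemma~\ref{Lemma:2.4}(b): $A^{q}$ commutes with each $Q(t^{\alpha}\theta)$ on $D(A^{q})$, hence with $T_{\alpha}(t)$, so factoring $A=A^{1-p}A^{p}$ gives $AT_{\alpha}(t)u=A^{1-p}T_{\alpha}(t)A^{p}u$. For the norm bound I would move $A^{q}$ inside the integral, estimate $\Vert A^{q}Q(t^{\alpha}\theta)\Vert\le M_{q}(t^{\alpha}\theta)^{-q}=M_{q}t^{-\alpha q}\theta^{-q}$ by Lemma~\ref{Lemma:2.4}(c) (discarding the factor $e^{-\omega t^{\alpha}\theta}\le 1$), and integrate to obtain $\Vert A^{q}T_{\alpha}(t)\Vert\le\alpha M_{q}t^{-\alpha q}\int_{0}^{\infty}\theta^{1-q}\zeta_{\alpha}(\theta)\,d\theta$. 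Applying the moment identity with $\nu=1-q$ converts the integral into $\Gamma(2-q)/\Gamma(1+\alpha(1-q))$, which is exactly the asserted constant. The only genuinely non-routine ingredients are the compactness truncation of (c) and the verification that the $\zeta_{\alpha}$-moments take the stated Gamma-quotient form.
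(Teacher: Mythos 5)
Your argument is correct, but note that the paper itself offers no proof of this lemma: it is quoted verbatim from the cited sources \cite{AMA.50,AMA.53}, so there is no internal proof to compare against. What you have written is essentially the standard derivation found in Zhou and Jiao \cite{AMA.53}: the uniform bounds in (a) via the zeroth and first moments of $\zeta_{\alpha}$, strong continuity in (b) by dominated convergence, compactness in (c) by truncating the integral near $\theta=0$ and factoring out $Q(t^{\alpha}\varepsilon)$ so that $S_{\alpha}(t)$ and $T_{\alpha}(t)$ become uniform limits of compact operators, and (d) by pushing $A^{q}$ under the integral and invoking Lemma~\ref{Lemma:2.4}(b)--(c). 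The one ingredient you assert rather than derive is the moment identity $\int_{0}^{\infty}\theta^{\nu}\zeta_{\alpha}(\theta)\,d\theta=\Gamma(1+\nu)/\Gamma(1+\alpha\nu)$; this is a known property of the Wright-type density $\zeta_{\alpha}$ (it is exactly what produces the constants $\Gamma(1+\alpha)^{-1}$ in (a) and $\Gamma(2-q)/\Gamma(1+\alpha(1-q))$ in (d)), and it should be cited to \cite{AMA.16} or \cite{AMA.53} rather than left as a remark, but this is a bookkeeping point, not a gap.
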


Motivated by the recent works \cite{AMA.18,AMA.24,AMA.41,AMA.42,AMA.47,AMA.50},
we make use of the following notions.
Let $u_{a}(u(0); \mu_{1}, \mu_{2})$ be the state value of \eqref{eq:1.1}--\eqref{eq:1.2}
at terminal time $a$, corresponding to the controls $\mu_{1}$ and $\mu_{2}$
and the nonlocal control value $u(0)$. For every $u_{0}\in H$, we introduce the set
$$
\mathfrak{R}(a, u(0))=\left\lbrace
u_{a}\left(B_{1}\mu_{1}(t)+u_{0}-h(u(t));\mu_{1}, \mu_{2}\right)(0)
: \mu_{1}(\cdot),\mu_{2}(\cdot)\in L^{2}(J, U)\right\rbrace,
$$
which is called the \emph{reachable set} of system \eqref{eq:1.1}--\eqref{eq:1.2}
at terminal time $a$. Its closure in $H$ is denoted by $\overline{\mathfrak{R}(a, u(0))}$.

\begin{definition}
\label{Definition:2.5}
The system \eqref{eq:1.1}--\eqref{eq:1.2} is said to be approximately controllable on $J$
if $\overline{\mathfrak{R}(a, u(0))}=H$, that is, given an arbitrary $\epsilon>0$,
it is possible to steer from the point $u(0)$ at time $a$ all points in the state space
$H$ within a distance $\epsilon$.
\end{definition}

Consider the following linear nonlocal control fractional system:
\begin{equation}
\label{eq:2.3}
D^{\alpha}_{t}u(t)=Au(t)+B_{1}\mu_{1}(t),
\end{equation}
\begin{equation}
\label{eq:2.4}
u(0)=u_{0}+B_{2}\mu_{2}.
\end{equation}
The approximate controllability for the linear nonlocal control
fractional system \eqref{eq:2.3}--\eqref{eq:2.4} is a natural
generalization of the notion of approximate controllability
of a linear first-order control system ($\alpha=1$ and $B_{2}=0$).
It is convenient at this point to introduce the controllability operators
associated with \eqref{eq:2.3}--\eqref{eq:2.4} as
\begin{equation}
\label{eq:2.5}
\begin{aligned}
&\Gamma^{a}_{0,1}=\int_{0}^{a}(a-s)^{\alpha-1}
T_{\alpha}(a-s)B_{1}B_{1}^{\ast}T_{\alpha}^{\ast}(a-s)ds,\\
&\Gamma^{a}_{0,2}=S_{\alpha}(a)B_{2}B_{2}^{\ast}S_{\alpha}^{\ast}(a),
\end{aligned}
\end{equation}
where $S_{\alpha}^{\ast}(t)$, $T_{\alpha}^{\ast}(t)$ and $B_{j}^{\ast}$, $j=1, 2$,
denote the adjoints of $S_{\alpha}(t)$, $T_{\alpha}(t)$ and $B_{j}$, respectively.
Moreover, we give the relevant operators
\begin{equation}
\label{eq:2.6}
\mathcal{R}(\lambda, \Gamma^{a}_{0,i})
=\left(\lambda I+\Gamma^{a}_{0,i}\right)^{-1}
\end{equation}
for $i=1,2$ and $\lambda>0$. It is straightforward to see that
$\Gamma^{a}_{0,1}$ and $\Gamma^{a}_{0,2}$ are linear bounded operators.

\begin{lemma}[See \cite{AMA.6,AMA.50}]
\label{Lemma:2.6}
The fractional linear control system \eqref{eq:2.3}--\eqref{eq:2.4}
is approximately controllable on $J$ if and only if
$\lambda\mathcal{R}(\lambda, \Gamma^{a}_{0,i})\rightarrow 0$ as $\lambda\rightarrow 0^{+}$, $i=1,2$,
in the strong operator topology.
\end{lemma}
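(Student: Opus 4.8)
The plan is to follow the classical Hilbert-space characterization of approximate controllability through the controllability Gramian, in the spirit of the references \cite{AMA.6,AMA.50}. The argument separates into two essentially independent parts: a purely operator-theoretic fact about nonnegative self-adjoint operators, and the identification of the annihilator of the reachable set with the kernels of the Gramians $\Gamma^{a}_{0,i}$. First I would record that each $\Gamma^{a}_{0,i}$ is a bounded, nonnegative, self-adjoint operator on $H$: this follows directly from the definitions \eqref{eq:2.5}, the boundedness of $S_{\alpha}$ and $T_{\alpha}$ in Lemma~\ref{Lemma:2.5}(a), the factor $B_{i}B_{i}^{\ast}$, and the adjoint pairing. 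Consequently the spectrum of $\Gamma^{a}_{0,i}$ lies in $[0,\infty)$, so $\lambda I+\Gamma^{a}_{0,i}$ is boundedly invertible for every $\lambda>0$ and $\Vert\lambda\mathcal{R}(\lambda,\Gamma^{a}_{0,i})\Vert\le 1$.

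The key operator-theoretic step is the following: for a nonnegative self-adjoint operator $\Gamma$, one has $\lambda(\lambda I+\Gamma)^{-1}\to 0$ strongly as $\lambda\to 0^{+}$ if and only if $\overline{\mathrm{Range}(\Gamma)}=H$, equivalently $\ker\Gamma=\lbrace 0\rbrace$. I would prove this via the resolvent identity: on $\mathrm{Range}(\Gamma)$, writing $x=\Gamma y$ gives $\lambda\mathcal{R}(\lambda,\Gamma)\Gamma y=\lambda y-\lambda^{2}\mathcal{R}(\lambda,\Gamma)y\to 0$, and by the uniform bound $\le 1$ the convergence extends to $\overline{\mathrm{Range}(\Gamma)}$; conversely, any $z\in\ker\Gamma$ satisfies $\lambda\mathcal{R}(\lambda,\Gamma)z=z$, forcing $z=0$. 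Since $\Gamma$ is self-adjoint, $\ker\Gamma=\mathrm{Range}(\Gamma)^{\perp}$, which closes the equivalence.

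Next I would connect the Gramians to reachability. Specializing the mild-solution formula \eqref{eq:2.2} to the linear system \eqref{eq:2.3}--\eqref{eq:2.4} (taking $g=h=0$ and $f$ single-valued) gives the terminal state $u(a)=S_{\alpha}(a)(u_{0}+B_{2}\mu_{2})+\int_{0}^{a}(a-s)^{\alpha-1}T_{\alpha}(a-s)B_{1}\mu_{1}(s)\,ds$. Because translation by the fixed vector $S_{\alpha}(a)u_{0}$ is a homeomorphism of $H$, density of $\mathfrak{R}(a,u(0))$ is equivalent to density of the control-dependent linear subspace, hence to the triviality of its annihilator; thus $z$ annihilates the reachable set precisely when $B_{2}^{\ast}S_{\alpha}^{\ast}(a)z=0$ and $B_{1}^{\ast}T_{\alpha}^{\ast}(a-s)z=0$ for a.e.\ $s\in[0,a]$. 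Pairing the Gramians against $z$ yields $\langle\Gamma^{a}_{0,2}z,z\rangle=\Vert B_{2}^{\ast}S_{\alpha}^{\ast}(a)z\Vert^{2}$ and $\langle\Gamma^{a}_{0,1}z,z\rangle=\int_{0}^{a}(a-s)^{\alpha-1}\Vert B_{1}^{\ast}T_{\alpha}^{\ast}(a-s)z\Vert^{2}\,ds$; since $(a-s)^{\alpha-1}>0$ on $[0,a)$, these vanish exactly under the above conditions. Therefore the annihilator of the reachable set coincides with $\ker\Gamma^{a}_{0,1}\cap\ker\Gamma^{a}_{0,2}$, and the resolvent condition for each index $i$ characterizes triviality of the corresponding kernel. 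Combining with the operator-theoretic lemma and the Hahn--Banach theorem gives $\overline{\mathfrak{R}(a,u(0))}=H$ if and only if $\lambda\mathcal{R}(\lambda,\Gamma^{a}_{0,i})\to 0$ strongly for $i=1,2$, which is the claim.

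The main obstacle I anticipate is the reachability-to-Gramian step rather than the resolvent lemma. One must justify the a.e.\ vanishing argument for the Bochner-measurable map $s\mapsto B_{1}^{\ast}T_{\alpha}^{\ast}(a-s)z$, and handle the singular weight $(a-s)^{\alpha-1}$ near $s=a$; the strong continuity and boundedness in Lemma~\ref{Lemma:2.5}(a)--(b) make this weight integrable and the pairings well defined, so the argument goes through. A minor point of care is that the two Gramians control complementary channels of the reachable set, so the honest equivalence is with $\ker\Gamma^{a}_{0,1}\cap\ker\Gamma^{a}_{0,2}=\lbrace 0\rbrace$, which the stated per-index resolvent conditions encode once both channels are required.
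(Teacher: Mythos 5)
The paper offers no proof of Lemma~\ref{Lemma:2.6}: it is imported verbatim from \cite{AMA.6,AMA.50}, so there is no in-paper argument to compare against. Your reconstruction is the standard Bashirov--Mahmudov route and most of it is sound: the Gramians \eqref{eq:2.5} are bounded, nonnegative, self-adjoint; the identity $\lambda\mathcal{R}(\lambda,\Gamma)\Gamma y=\lambda y-\lambda^{2}\mathcal{R}(\lambda,\Gamma)y$ together with $\Vert\lambda\mathcal{R}(\lambda,\Gamma)\Vert\le 1$ shows that $\lambda\mathcal{R}(\lambda,\Gamma)\rightarrow 0$ strongly exactly on $\overline{\mathrm{Range}(\Gamma)}=(\ker\Gamma)^{\perp}$, while $\lambda\mathcal{R}(\lambda,\Gamma)z=z$ on $\ker\Gamma$; and the quadratic forms $\langle\Gamma^{a}_{0,2}z,z\rangle=\Vert B_{2}^{\ast}S_{\alpha}^{\ast}(a)z\Vert^{2}$ and $\langle\Gamma^{a}_{0,1}z,z\rangle=\int_{0}^{a}(a-s)^{\alpha-1}\Vert B_{1}^{\ast}T_{\alpha}^{\ast}(a-s)z\Vert^{2}ds$ correctly identify $\ker\Gamma^{a}_{0,i}$ with the annihilators of the two control channels. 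The measurability and integrability of the weighted integrand are handled by Lemma~\ref{Lemma:2.5}(a)--(b) as you say.

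The genuine gap is the point you raise in your final sentence and then dismiss. By your own identification, the annihilator of the reachable set of \eqref{eq:2.3}--\eqref{eq:2.4} is $\ker\Gamma^{a}_{0,1}\cap\ker\Gamma^{a}_{0,2}$, so approximate controllability is equivalent to $\ker\Gamma^{a}_{0,1}\cap\ker\Gamma^{a}_{0,2}=\lbrace 0\rbrace$, whereas the resolvent condition for both $i=1,2$ is equivalent to the strictly stronger statement that each kernel is separately trivial. These do not coincide in general: if, say, $B_{2}=0$ while the $B_{1}$-channel alone is approximately controllable, the system is approximately controllable yet $\lambda\mathcal{R}(\lambda,\Gamma^{a}_{0,2})=I$ for every $\lambda>0$. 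Hence your argument establishes the ``if'' direction (which is all that Theorem~\ref{Theorem:3.2} actually uses) but not the stated ``only if''; saying the per-index conditions ``encode'' the intersection condition ``once both channels are required'' is not a proof. To close this you must either strengthen the notion of approximate controllability to demand that each control channel is separately approximately controllable, or note explicitly that the biconditional in the cited sources concerns a single Gramian and that the two-Gramian version holds only as a sufficiency statement. This is arguably a looseness in the lemma as stated rather than in your resolvent analysis, but a complete proof has to commit to one of these readings.
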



\section{Main Results}
\label{sec:3}

We obtain existence and approximate controllability results
for the fractional nonlocal control inclusion
\eqref{eq:1.1}--\eqref{eq:1.2}. We consider the following hypotheses:
\begin{itemize}
\item[(H$_1$)] There exists a constant $p\in(0, 1)$
such that the function $g(\cdot,\cdot)$ maps
$[0, a]\times H_{q}$ into $H_{p+q}$ and $A^{p}g: [0, a]\times H_{q}\rightarrow H_{q}$
satisfies a Lipschitz condition, that is, there exists a constant $L_{1}>0$ such that
\begin{equation}
\label{eq:3.1}
\Vert A^{p}g(t_{1}, u_{1})-A^{p}g(t_{2}, u_{2})\Vert_{q}
\leq L_{1}\left(\vert t_{1}-t_{2}\vert + \Vert u_{1}-u_{2}\Vert_{q}\right)
\end{equation}
for any $0\leq t_{1},t_{2}\leq a$, $u_{1}, u_{2}\in H_{q}$.
Moreover, there exists a constant $L_{2}>0$ such that the inequality
\begin{equation}
\label{eq:3.2}
\Vert A^{p}g(t, u)\Vert_{q}\leq L_{2}
\end{equation}
holds for any $u\in H_{q}$.
\item[(H$_2$)] The multi-valued map
$f: \Delta\times H_{q}\rightarrow P_{c,cp}(H)$ satisfies the following conditions:
\begin{itemize}
\item[(i)] function $f(t, s, \cdot): H_{q}\rightarrow P_{c,cp}(H)$ is u.s.c.
for each $(t, s)\in \Delta$, function $f(\cdot,\cdot, B_{1}\mu_{1})$ is measurable
for each $\mu_{1}\in L^{2}(J^{\prime}$, $U_{q})$, and the set
$$
S_{f, \mu_{1}}=\left\lbrace v\in L^{1}(J^{\prime}, H): B_{1}\mu_{1}(t)+v(\delta(t))
\in \int_{0}^{t}f(t, s, B_{1}\mu_{1}(\delta(s)))ds \text{ a.e. on } H_{q}\right\rbrace
$$
is nonempty;
\item[(ii)] there exists a positive constant $\omega$ such that
$\left\Vert \int_{0}^{t}f(t, s, B_{1}\mu_{1}(\delta(s)))ds\right\Vert \leq \omega$
for all $(t, s, \cdot)\in \Delta\times H_{q}$, where
$$
\left\Vert \int_{0}^{t}f(t, s, B_{1}\mu_{1}(\delta(s)))ds\right\Vert
=\sup\left\lbrace \Vert v\Vert:
B_{1}\mu_{1}+v\in \int_{0}^{t}f(t, s, B_{1}\mu_{1}(\delta(s)))ds\right\rbrace.
$$
\end{itemize}
\item[(H$_{3}$)] Function $h: C(J, H)\rightarrow H_{q}$
is a completely continuous map and there exists a positive constant
$k$ such that $\Vert h(u)\Vert_{q}\leq k$.
\item[(H$_{4}$)] The delay arguments $\sigma, \delta: J\rightarrow J^{\prime}$
are absolutely continuous and satisfy $\vert\sigma(t)\vert\leq t$,
$\vert\delta(t)\vert\leq t$, for every $t\in J$.
\end{itemize}
For $\lambda>0, u\in \Omega$, we define the operator
$F^{\lambda}: \Omega\rightarrow 2^{\Omega}$ as follows:
\begin{multline*}
F^{\lambda}(u)=\Biggl\lbrace z\in \Omega: z(t)
=S_{\alpha}(t)[B_{2}\mu_{2}(t)+u_{0}-h(u(t))
-g(0, u(\sigma(0)))]+g(t, u(\sigma(t)))\\
+\int_{0}^{t}(t-s)^{\alpha-1}\lbrace AT_{\alpha}(t-s)g(s, u(\sigma(s)))
+T_{\alpha}(t-s)[v(\delta(s))+B_{1}\mu_{1}(s)]\rbrace ds, v\in S_{f, \mu_{1}}\Biggr\rbrace.
\end{multline*}
For any $u(\cdot)\in \Omega, u_{a}\in H$, we take the controls
\begin{equation}
\label{eq:3.3}
\mu_{1}=B_{1}^{\ast}T_{\alpha}^{\ast}(a-t)
\mathcal{R}(\lambda, \Gamma^{a}_{0,1})P(u(\cdot)),
\quad \mu_{2}=B_{2}^{\ast}S_{\alpha}^{\ast}(a)
\mathcal{R}(\lambda, \Gamma^{a}_{0,2})P(u(\cdot)),
\end{equation}
where
\begin{multline}
\label{eq:3.4}
P\left(u(\cdot)\right)=u_{a}-S_{\alpha}(a)\left[u_{0}-h(u(t))
-g\left(0, u(\sigma(0))\right)\right]-g\left(t, u(\sigma(t))\right)\\
-\int_{0}^{a}(a-s)^{\alpha-1}\left\lbrace AT_{\alpha}(a-s)
g\left(s, u(\sigma(s))\right) + T_{\alpha}(a-s)v(\delta(s))\right\rbrace ds.
\end{multline}
We note that the fixed points of $F^{\lambda}$ are mild solutions
of the fractional nonlocal control inclusion \eqref{eq:1.1}--\eqref{eq:1.2}.

\begin{theorem}
\label{Theorem:3.1}
Let $u_{0}\in H_{q}$. If hypotheses (H$_1$)--(H$_4$) are satisfied, then
$F^{\lambda}$ has a fixed point on $J$ for each $\lambda>0$, provided
$\displaystyle L_{1}\left[M\Vert A^{-p}\Vert+\Vert A^{-p}\Vert+\frac{a^{p\alpha}}{p\alpha}
\frac{\alpha M_{1-p}\Gamma(1+p)}{\Gamma(1+\alpha p)}\right]<1$.
\end{theorem}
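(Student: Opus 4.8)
The plan is to verify the hypotheses of Lemma~\ref{Lemma:2.3} (the Sadovskii-type theorem for condensing multi-valued maps) on a closed ball rather than on all of $\Omega$, since $\Omega$ itself is unbounded. First I would fix $\lambda>0$, insert the controls \eqref{eq:3.3}--\eqref{eq:3.4} into the definition of $F^{\lambda}$, and bound $\mu_1,\mu_2$: from Lemma~\ref{Lemma:2.5}(a) the norms $\Vert S_\alpha\Vert,\Vert T_\alpha\Vert$ are bounded, while $\Vert\mathcal R(\lambda,\Gamma^a_{0,i})\Vert\le 1/\lambda$ because $\Gamma^a_{0,i}\ge 0$; together with (H$_1$)--(H$_3$) this gives an a priori bound on $\Vert P(u(\cdot))\Vert$ and hence on the controls. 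Using these bounds together with Lemma~\ref{Lemma:2.5}(a), (H$_2$)(ii), (H$_3$) and, for the delay term $\int_0^t(t-s)^{\alpha-1}AT_\alpha(t-s)g\,ds$, the factorization $AT_\alpha=A^{1-p}T_\alpha A^{p}$ with the estimate $\Vert A^{1-p}T_\alpha(t)\Vert\le \frac{\alpha M_{1-p}\Gamma(1+p)}{\Gamma(1+\alpha p)}t^{-(1-p)\alpha}$ from Lemma~\ref{Lemma:2.5}(d) (so that $\int_0^t(t-s)^{p\alpha-1}ds=t^{p\alpha}/(p\alpha)$ converges), I would exhibit $r>0$ with $F^{\lambda}(B_r)\subseteq B_r$, where $B_r=\{u\in\Omega:\Vert u\Vert_\Omega\le r\}$ is bounded, convex and closed.

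Next I would check that $F^{\lambda}(u)$ is convex for each $u\in B_r$. Convexity follows because $f$ is convex-valued by (H$_2$), so the selection set $S_{f,\mu_1}$ is convex, and the output $z$ depends affinely on the selection $v$ through the linear operators $S_\alpha,T_\alpha$ and the linear dependence of the controls on $v$ via \eqref{eq:3.3}--\eqref{eq:3.4}; thus a convex combination of selections yields the corresponding convex combination of outputs. Closedness of the values would be obtained together with upper semicontinuity in the final step.

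The core of the argument is to show that $F^{\lambda}$ is condensing. I would split $F^{\lambda}=\Phi_1+\Phi_2$, where $\Phi_1(u)(t)=-S_\alpha(t)g(0,u(\sigma(0)))+g(t,u(\sigma(t)))+\int_0^t(t-s)^{\alpha-1}AT_\alpha(t-s)g(s,u(\sigma(s)))\,ds$ collects the $g$-terms and $\Phi_2$ is the remainder. By (H$_1$) and the estimates above, $\Phi_1$ is Lipschitz with constant $L:=L_1\big[M\Vert A^{-p}\Vert+\Vert A^{-p}\Vert+\frac{a^{p\alpha}}{p\alpha}\frac{\alpha M_{1-p}\Gamma(1+p)}{\Gamma(1+\alpha p)}\big]<1$, precisely the quantity assumed to be less than one. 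For $\Phi_2$ I would prove complete continuity via Arzel\`a--Ascoli: for each fixed $t>0$ the compactness of $S_\alpha(t),T_\alpha(t)$ (Lemma~\ref{Lemma:2.5}(c)), the complete continuity of $h$ (H$_3$) and the uniform bound (H$_2$)(ii) give relative compactness of $\{\Phi_2(u)(t):u\in B_r\}$ in $H_q$, while equicontinuity in $t$ follows from the strong continuity of the operator families (Lemma~\ref{Lemma:2.5}(b)) and the absolute continuity of the delay arguments (H$_4$). Then, for any bounded $B$, the Kuratowski measure satisfies $\beta(F^{\lambda}(B))\le\beta(\Phi_1(B))+\beta(\Phi_2(B))\le L\,\beta(B)<\beta(B)$ whenever $\beta(B)>0$, so $F^{\lambda}$ is condensing.

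Finally, upper semicontinuity together with closedness of the values would follow from the closed-graph criterion of Proposition~\ref{Proposition:2.1}(4): if $u_n\to u$ and $z_n\in F^{\lambda}(u_n)$ with $z_n\to z$, I would extract the associated selections $v_n\in S_{f,\mu_1}$, invoke the closed-graph property of $P\circ S_{f,\mu_1}$ from Proposition~\ref{Proposition:2.2} to pass to a limit selection, and conclude $z\in F^{\lambda}(u)$ using continuity of the linear operators and of $g,h$. With $B_r$ bounded, convex and closed, and $F^{\lambda}:B_r\to 2^{B_r}\setminus\{\emptyset\}$ u.s.c., condensing and convex-closed valued, Lemma~\ref{Lemma:2.3} yields a fixed point of $F^{\lambda}$. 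I expect the main obstacle to be the complete continuity of $\Phi_2$: one must control the singular factor $(t-s)^{\alpha-1}$ and the degeneration of the compactness of $T_\alpha(t-s)$ as $s\to t$, which I would handle by an $\varepsilon$-truncation of the integral away from $s=t$ together with a uniform bound on the truncated tail, and one must establish equicontinuity uniformly up to $t=0$.
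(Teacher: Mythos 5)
Your proposal is correct and follows essentially the same route as the paper: the same invariant ball $\Omega_r$, the same decomposition of $F^{\lambda}$ into the $g$-part (a contraction with exactly the constant $L_{1}\bigl[M\Vert A^{-p}\Vert+\Vert A^{-p}\Vert+\frac{a^{p\alpha}}{p\alpha}\frac{\alpha M_{1-p}\Gamma(1+p)}{\Gamma(1+\alpha p)}\bigr]$) plus a completely continuous remainder handled by Arzel\`a--Ascoli and the $\epsilon$-truncation, the same closed-graph argument via Proposition~\ref{Proposition:2.2}, and the same appeal to Lemma~\ref{Lemma:2.3}. The only cosmetic difference is that the paper proves closedness of the values as a separate step using weak compactness of $S_{f,\mu_{1}}$ in $L^{1}$, whereas you fold it into the upper semicontinuity argument.
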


\begin{proof}
In order to prove the existence of mild solutions for system
\eqref{eq:1.1}--\eqref{eq:1.2}, we divide the proof into several steps.

\emph{Step 1.} For each $0<\lambda<1$, $F^{\lambda}(u)$ is bounded.
Using \eqref{eq:3.3}, Lemma~\ref{Lemma:2.5}, \eqref{eq:2.5} and \eqref{eq:2.6}, we get
\begin{equation*}
\Vert\mu_{1}\Vert\leq \frac{1}{\lambda}\frac{M}{\Gamma (\alpha)}\Vert B_{1}\Vert\Vert P(u(\cdot))\Vert,
\quad \Vert\mu_{2}\Vert\leq \frac{1}{\lambda}M\Vert B_{2}\Vert\Vert P(u(\cdot))\Vert.
\end{equation*}
Using \eqref{eq:3.4}, \eqref{eq:3.2},
Lemmas~\ref{Lemma:2.4}--\ref{Lemma:2.5}, and (H$_2$)--(H$_4$), we obtain that
\begin{equation*}
\begin{split}
\Vert P(u(\cdot))\Vert&\leq \Vert u_{a}\Vert
+\Vert A^{-q}\Vert\Biggl\lbrace \Vert S_{\alpha}(a)[u_{0}-h(u(t))-A^{-p}A^{p}g(0, u(\sigma(0)))]\Vert_{q}\\
&\quad +\Vert A^{-p}A^{p}g(t, u(\sigma(t)))\Vert_{q}\\
&\quad +\left\Vert\int_{0}^{a}(a-s)^{\alpha-1}A^{1-p}T_{\alpha}(a-s)A^{p}g(s, u(\sigma(s)))ds\right\Vert_{q}\\
&\quad +\left\Vert\int_{0}^{a}(a-s)^{\alpha-1}T_{\alpha}(a-s)v(\delta(s))ds\right\Vert_{q}\Biggr\rbrace\\
&\leq \Vert u_{a}\Vert+\Vert A^{-q}\Vert\Biggl\lbrace M[\Vert u_{0}\Vert_{q}+k+\Vert A^{-p}\Vert L_{2}]+\Vert A^{-p}\Vert L_{2}\\
&\quad + \int_{0}^{a}(a-s)^{\alpha-1}\frac{\alpha M_{1-p}\Gamma(1+p)}{\Gamma(1+\alpha p)}(a-s)^{-(1-p)\alpha}L_{2}ds\\
&\quad + \int_{0}^{a}(a-s)^{\alpha-1}\frac{\alpha M_{q}\Gamma(2-q)}{\Gamma(1+\alpha (1-q))}(a-s)^{-q\alpha}\omega ds\Biggr\rbrace\\
&\leq \Vert u_{a}\Vert+\Vert A^{-q}\Vert\biggl\lbrace M[\Vert u_{0}\Vert_{q}
+k+\Vert A^{-p}\Vert L_{2}]+\Vert A^{-p}\Vert L_{2}\\
&\quad + \frac{a^{p\alpha}}{p\alpha}\frac{\alpha M_{1-p}\Gamma(1+p)}{\Gamma(1+\alpha p)}L_{2}
+\frac{a^{\alpha(1-q)}}{\alpha(1-q)}\frac{\alpha M_{q}\Gamma(2-q)}{\Gamma(1+\alpha (1-q))}\omega\biggr\rbrace.
\end{split}
\end{equation*}
Now, for $z\in F^{\lambda}(u)$, we have
\begin{equation*}
\begin{split}
\Vert z(t)\Vert_{q}
&\leq \left\Vert S_{\alpha}(t)\left[B_{2}\mu_{2}(t)
+u_{0}-h(u(t))-A^{-p}A^{p}g(0, u(\sigma(0)))\right]\right\Vert_{q}\\
&\quad +\Vert A^{-p}A^{p}g(t, u(\sigma(t)))\Vert_{q}+\left\Vert
\int_{0}^{t}(t-s)^{\alpha-1} A^{1-p}T_{\alpha}(t-s)A^{p}g(s, u(\sigma(s)))ds\right\Vert_{q}\\
&\quad +\left\Vert\int_{0}^{t}(t-s)^{\alpha-1}T_{\alpha}(t-s)[v(\delta(s))+B_{1}\mu_{1}(s)] ds\right\Vert_{q}\\
&\leq M\left[\Vert B_{2}\Vert_{q}\Vert\mu_{2}\Vert+\Vert u_{0}\Vert_{q}+k+M_{p}L_{2}\right]+M_{p}L_{2}\\
&\quad + \int_{0}^{t}(t-s)^{\alpha-1}\frac{\alpha M_{1-p}\Gamma(1+p)}{\Gamma(1+\alpha p)}(t-s)^{-(1-p)\alpha}L_{2}ds\\
&\quad +\int_{0}^{t}(t-s)^{\alpha-1}\frac{\alpha M_{q}\Gamma(2-q)}{\Gamma(1+\alpha (1-q))}(t-s)^{-q\alpha}\left[
\omega+\Vert B_{1}\Vert\Vert\mu_{1}\Vert\right]ds\\
&\leq M\left[\Vert B_{2}\Vert_{q}\Vert\mu_{2}\Vert+\Vert u_{0}\Vert_{q}+k+\Vert A^{-p}\Vert L_{2}\right]
+\Vert A^{-p}\Vert L_{2}\\
&\quad + \frac{a^{p\alpha}}{p\alpha}\frac{\alpha M_{1-p}\Gamma(1+p)}{\Gamma(1+\alpha p)}L_{2}
+\frac{a^{\alpha(1-q)}}{\alpha(1-q)}\frac{\alpha M_{q}\Gamma(2-q)}{\Gamma(1
+\alpha (1-q))}\left[\omega+\Vert B_{1}\Vert\Vert\mu_{1}\Vert\right].
\end{split}
\end{equation*}
Thus, for every $u\in \Omega$, there exists a positive constant $r$ satisfying
$\Vert u\Vert_{\Omega}\leq r$. Hence, $F^{\lambda}(\Omega_{r})\subset\Omega_{r}$,
where $\Omega_{r}=\lbrace u\in \Omega: \Vert u\Vert_{\Omega}\leq r\rbrace$.

\emph{Step 2.} $F^{\lambda}(u)$ is convex for each $u\in \Omega_{r}$.
If $z_{1}, z_{2}\in F^{\lambda}(u)$, then there exists $v_{1}, v_{2}\in S_{f,\mu_{1}}$
such that, for each $t\in J$, we have
\begin{multline*}
z_{i}(t)=S_{\alpha}(t)\left[B_{2}\mu_{2,i}(t)+u_{0}-h(u(t))
-g(0, u(\sigma(0)))\right]+g\left(t, u(\sigma(t))\right)\\
+\int_{0}^{t}(t-s)^{\alpha-1}\lbrace AT_{\alpha}(t-s)g(s, u(\sigma(s)))
+T_{\alpha}(t-s)[v_{i}(\delta(s))+B_{1}\mu_{1,i}(s)]\rbrace ds,
\end{multline*}
where
\begin{multline*}
\mu_{1,i}=B_{1}^{\ast}T_{\alpha}^{\ast}(a-t)\mathcal{R}(\lambda, \Gamma^{a}_{0,1})
\biggl[u_{a}-S_{\alpha}(a)[u_{0}-h(u(t))-g(0, u(\sigma(0)))]-g(t, u(\sigma(t)))\\
-\int_{0}^{a}(a-s)^{\alpha-1}\left\lbrace AT_{\alpha}(a-s)g(s, u(\sigma(s)))
+T_{\alpha}(a-s)v_{i}(\delta(s))\right\rbrace ds\biggr],
\end{multline*}
\begin{multline*}
\mu_{2,i}=B_{2}^{\ast}S_{\alpha}^{\ast}(a)\mathcal{R}(\lambda, \Gamma^{a}_{0,2})\biggl[u_{a}
-S_{\alpha}(a)[u_{0}-h(u(t))-g(0, u(\sigma(0)))]-g(t, u(\sigma(t)))\\
-\int_{0}^{a}(a-s)^{\alpha-1}\left\lbrace AT_{\alpha}(a-s)g\left(s, u(\sigma(s))\right)
+T_{\alpha}(a-s)v_{i}(\delta(s))\right\rbrace ds\biggr],
\end{multline*}
$i=1,2$. Let $0\leq\beta\leq1$. Then,
\begin{equation*}
\begin{split}
\beta z_{1}(t)+(1-\beta)z_{2}(t)
=S_{\alpha}&(t)\left\lbrace B_{2}\left[\beta\mu_{2,1}(t)+(1-\beta)\mu_{2,2}(t)\right]
+u_{0}-h(u(t))-g(0, u(\sigma(0)))\right\rbrace\\
&+g\left(t, u(\sigma(t))\right)+\int_{0}^{t}(t-s)^{\alpha-1}AT_{\alpha}(t-s)g\left(s, u(\sigma(s))\right)ds\\
&+\int_{0}^{t}(t-s)^{\alpha-1}T_{\alpha}(t-s)\left[\beta v_{1}(\delta(s))+(1-\beta)v_{2}(\delta(s))\right]ds\\
&+\int_{0}^{t}(t-s)^{\alpha-1}T_{\alpha}(t-s)B_{1}\left[\beta\mu_{1,1}(s)+(1-\beta)\mu_{1,2}(s)\right] ds,
\end{split}
\end{equation*}
where
\begin{equation*}
\begin{split}
\beta\mu_{1,1}&(t)+(1-\beta)\mu_{1,2}(t)\\
&=B_{1}^{\ast}T_{\alpha}^{\ast}(a-t)\mathcal{R}(\lambda, \Gamma^{a}_{0,1})
\biggl[u_{a}-S_{\alpha}(a)[u_{0}-h(u(t))-g(0, u(\sigma(0)))]-g\left(t, u(\sigma(t))\right)\\
&-\int_{0}^{a}(a-s)^{\alpha-1}\left\lbrace AT_{\alpha}(a-s)g\left(s, u(\sigma(s))\right)
+T_{\alpha}(a-s)\left[\beta v_{1}(\delta(s))+(1-\beta)v_{2}(\delta(s))\right]\right\rbrace ds\biggr],
\end{split}
\end{equation*}
\begin{equation*}
\begin{split}
\beta\mu_{2,1}&(t)+(1-\beta)\mu_{2,2}(t)\\
&=B_{2}^{\ast}S_{\alpha}^{\ast}(a)\mathcal{R}(\lambda, \Gamma^{a}_{0,2})\biggl[
u_{a}-S_{\alpha}(a)[u_{0}-h(u(t))-g(0, u(\sigma(0)))]-g(t, u(\sigma(t)))\\
&-\int_{0}^{a}(a-s)^{\alpha-1}\left\lbrace AT_{\alpha}(a-s)g(s, u(\sigma(s)))
+T_{\alpha}(a-s)[\beta v_{1}(\delta(s))+(1-\beta)v_{2}(\delta(s))]\right\rbrace ds\biggr].
\end{split}
\end{equation*}
Since the multi-valued map $f$ has convex values, then $S_{f,\mu_{1}}$ is convex.
As required, we conclude that $\beta z_{1}(t)+(1-\beta)z_{2}(t)\in F^{\lambda}(u)$.

\emph{Step 3.} $F^{\lambda}(u)$ is closed for each $u\in \Omega_{r}$.
Let $\lbrace z_{n}\rbrace_{n\geq0}\in F^{\lambda}(u)$ for $z_{n}\rightarrow z\in \Omega_{r}$.
Then, there exists $v_{n}\in S_{f,\mu_{1}}$ such that
\begin{multline*}
z_{n}(t)=S_{\alpha}(t)\left[B_{2}\mu_{2,n}(t)+u_{0}-h(u(t))-g\left(0, u(\sigma(0))\right)\right]
+g\left(t, u(\sigma(t))\right)\\
+\int_{0}^{t}(t-s)^{\alpha-1}\lbrace AT_{\alpha}(t-s)g\left(s, u(\sigma(s))\right)
+T_{\alpha}(t-s)\left[v_{n}(\delta(s))+B_{1}\mu_{1,n}(s)\right]\rbrace ds
\end{multline*}
for every $t\in J$, where
\begin{multline*}
\mu_{1,n}=B_{1}^{\ast}T_{\alpha}^{\ast}(a-t)\mathcal{R}\left(\lambda, \Gamma^{a}_{0,1}\right)\biggl[
u_{a}-S_{\alpha}(a)[u_{0}-h(u(t))-g(0, u(\sigma(0)))]-g\left(t, u(\sigma(t))\right)\\
-\int_{0}^{a}(a-s)^{\alpha-1}\left\lbrace AT_{\alpha}(a-s)g(s, u(\sigma(s)))
+T_{\alpha}(a-s)v_{n}(\delta(s))\right\rbrace ds\biggr],
\end{multline*}
\begin{multline*}
\mu_{2,n}=B_{2}^{\ast}S_{\alpha}^{\ast}(a)\mathcal{R}(\lambda, \Gamma^{a}_{0,2})\biggl[
u_{a}-S_{\alpha}(a)[u_{0}-h(u(t))-g(0, u(\sigma(0)))]-g\left(t, u(\sigma(t))\right)\\
-\int_{0}^{a}(a-s)^{\alpha-1}\left\lbrace AT_{\alpha}(a-s)g\left(s, u(\sigma(s))\right)
+T_{\alpha}(a-s)v_{n}(\delta(s))\right\rbrace ds\biggr].
\end{multline*}
From \cite{AMA.35}, we deduce that $S_{f,\mu_{1}}$ is weakly compact in $L^{1}(J, H)$,
which implies that $v_{n}$ converges weakly ($\stackrel{\star}{\longrightarrow}$ for short)
to some $v\in S_{f,\mu_{1}}$ in $L^{1}(J, H)$. Therefore,
\begin{multline*}
\mu_{1,n}\stackrel{\star}{\longrightarrow}\mu_{1}
=B_{1}^{\ast}T_{\alpha}^{\ast}(a-t)\mathcal{R}\left(\lambda, \Gamma^{a}_{0,1}\right)\biggl[
u_{a}-S_{\alpha}(a)\left[u_{0}-h(u(t))-g(0, u(\sigma(0)))\right]-g\left(t, u(\sigma(t))\right)\\
-\int_{0}^{a}(a-s)^{\alpha-1}\left\lbrace AT_{\alpha}(a-s)g\left(s, u(\sigma(s))\right)
+T_{\alpha}(a-s)v(\delta(s))\right\rbrace ds\biggr],
\end{multline*}
\begin{multline*}
\mu_{2,n}\stackrel{\star}{\longrightarrow}\mu_{2}
=B_{2}^{\ast}S_{\alpha}^{\ast}(a)\mathcal{R}(\lambda, \Gamma^{a}_{0,2})\biggl[
u_{a}-S_{\alpha}(a)[u_{0}-h(u(t))-g(0, u(\sigma(0)))]-g\left(t, u(\sigma(t))\right)\\
-\int_{0}^{a}(a-s)^{\alpha-1}\left\lbrace AT_{\alpha}(a-s)g\left(s, u(\sigma(s))\right)
+T_{\alpha}(a-s)v(\delta(s))\right\rbrace ds\biggr].
\end{multline*}
Thus,
\begin{multline*}
z_{n}(t)\rightarrow z(t)=S_{\alpha}(t)\left[B_{2}\mu_{2}(t)
+u_{0}-h(u(t))-g\left(0, u(\sigma(0))\right)\right]+g\left(t, u(\sigma(t))\right)\\
+\int_{0}^{t}(t-s)^{\alpha-1}\lbrace AT_{\alpha}(t-s)g\left(s, u(\sigma(s))\right)
+T_{\alpha}(t-s)\left[v(\delta(s))+B_{1}\mu_{1}(s)\right]\rbrace ds.
\end{multline*}
Hence, $z\in F^{\lambda}(u)$.

\emph{Step 4.} $F^{\lambda}(u)$ is u.s.c. and condensing.
We make the decomposition $F^{\lambda}=F^{\lambda}_{1}+F^{\lambda}_{2}$,
where the operators $F^{\lambda}_{1}$ and $F^{\lambda}_{2}$ are defined by
$$
\left(F^{\lambda}_{1}u\right)(t)=-S_{\alpha}(t)g\left(0, u(\sigma(0))\right)
+g\left(t, u(\sigma(t))\right)+\int_{0}^{t}(t-s)^{\alpha-1}AT_{\alpha}(t-s)
g\left(s, u(\sigma(s))\right)ds
$$
and
\begin{multline*}
F^{\lambda}_{2}(u)=\biggl\lbrace z\in \Omega_{r}: z(t)
=S_{\alpha}(t)\left[B_{2}\mu_{2}(t)+u_{0}-h(u(t))\right]\\
+\int_{0}^{t}(t-s)^{\alpha-1}T_{\alpha}(t-s)\left[v(\delta(s))+B_{1}\mu_{1}(s)\right]ds,
\quad v\in S_{f, \mu_{1}}\biggr\rbrace.
\end{multline*}
We show that $F^{\lambda}_{1}$ is a contraction operator while
$F^{\lambda}_{2}$ is completely continuous. Let $u_{1}, u_{2}\in\Omega_{r}$.
Then, for each $t\in J$, condition \eqref{eq:3.1} gives
\begin{equation*}
\begin{split}
\Vert F^{\lambda}_{1}u_{1}(t)-F^{\lambda}_{1}u_{2}(t)\Vert_{q}
&\leq \Vert S_{\alpha}(t)A^{-p}\left[A^{p}g(0, u_{1}(\sigma(0)))
-A^{p}g(0, u_{2}(\sigma(0)))\right]\Vert_{q}\\
&\ +\Vert A^{-p}\left[A^{p}g(t, u_{1}(\sigma(t)))
-A^{p}g(t, u_{2}(\sigma(t)))\right]\Vert_{q}\\
&\ +\left\Vert\int_{0}^{t}(t-s)^{\alpha-1}A^{1-p}T_{\alpha}(t-s)A^{p}
\left[g(s, u_{1}(\sigma(s)))-g(s, u_{2}(\sigma(s)))\right]ds\right\Vert_{q}\\
&\leq \left[M\Vert A^{-p}\Vert L_{1}+\Vert A^{-p}\Vert L_{1}\right]
\sup\limits_{s\in J}\Vert u_{1}(s)-u_{2}(s)\Vert_{q}\\
&\ +\int_{0}^{t}(t-s)^{\alpha-1}\frac{\alpha M_{1-p}\Gamma(1+p)}{\Gamma(1+\alpha p)}(t-s)^{-(1-p)\alpha}ds
L_{1}\sup\limits_{s\in J}\Vert u_{1}(s)-u_{2}(s)\Vert_{q}\\
&\leq L_{1}\left[M\Vert A^{-p}\Vert+\Vert A^{-p}\Vert
+\frac{a^{p\alpha}}{p\alpha}\frac{\alpha M_{1-p}\Gamma(1+p)}{\Gamma(1+\alpha p)}\right]
\sup\limits_{s\in J}\Vert u_{1}(s)-u_{2}(s)\Vert_{q}.
\end{split}
\end{equation*}
Therefore, $\Vert F^{\lambda}_{1}u_{1}(t)-F^{\lambda}_{1}u_{2}(t)\Vert_{q}
\leq K\sup\limits_{s\in J}\Vert u_{1}(s)-u_{2}(s)\Vert_{q}$,
where $0\leq K<1$. Hence $F^{\lambda}_{1}$ is a contraction operator.
Next, we show that $F^{\lambda}_{2}$ is u.s.c. and completely continuous.
We begin to prove that $F^{\lambda}_{2}$ is completely continuous.

1. $F^{\lambda}_{2}$ is already bounded.

2. $F^{\lambda}_{2}$ is equicontinuous on $\Omega_{r}$.
Let $u\in \Omega_{r}$, $z\in (F^{\lambda}_{2})(u)$.
Then there exists $v\in S_{f,\mu_{1}}$ such that
$$
z(t)=S_{\alpha}(t)\left[B_{2}\mu_{2}(t)+u_{0}-h(u(t))\right]
+\int_{0}^{t}(t-s)^{\alpha-1}T_{\alpha}(t-s)\left[v(\delta(s))+B_{1}\mu_{1}(s)\right]ds
$$
for each $t\in J$. It follows that
\begin{equation*}
\begin{split}
\Vert z(\tau)-z(0)\Vert_{q}
&=\biggl\Vert (S_{\alpha}(\tau)-I)[A^{q}B_{2}\mu_{2}(t)+A^{q}u_{0}-A^{q}h(u(t))]\\
&\quad +\int_{0}^{\tau}(\tau-s)^{\alpha-1}A^{q}T_{\alpha}(\tau-s)[v(\delta(s))+B_{1}\mu_{1}(s)]ds\biggr\Vert\\
&\leq\left\Vert \left(S_{\alpha}(\tau)-I\right)\left[A^{q}B_{2}\mu_{2}(t)+A^{q}u_{0}-A^{q}h(u(t))\right]\right\Vert\\
&\quad + \frac{\tau^{\alpha(1-q)}}{\alpha(1-q)}\frac{\alpha M_{q}\Gamma(2-q)}{\Gamma(1+\alpha (1-q))}
\left[\omega+\Vert B_{1}\Vert\Vert\mu_{1}\Vert\right]\longrightarrow 0
\end{split}
\end{equation*}
as $\tau\rightarrow 0$ uniformly, since, by (H$_3$) and Lemma~\ref{Lemma:2.5},
the complete continuity of $A^{q}h$  and the strong continuity of $S_{\alpha}(t)$
at $t=0$ are satisfied, respectively. Let $\tau_{1}, \tau_{2}\in J$
with $0<s<\tau_{1}<\tau_{2}\leq a$. Then
\begin{equation*}
\begin{split}
\Vert z(\tau_{2})-z(\tau_{1})\Vert_{q}
&\leq\Vert \left[S_{\alpha}(\tau_{2})-S_{\alpha}(\tau_{1})\right]
\left[B_{2}\mu_{2}(t)+u_{0}-h(u(t))\right]\Vert_{q}\\
&\ +\left\Vert\int_{\tau_{1}}^{\tau_{2}}(\tau_{2}-s)^{\alpha-1}T_{\alpha}(\tau_{2}-s)
\left[v(\delta(s))+B_{1}\mu_{1}(s)\right]ds\right\Vert_{q}\\
&\ +\left\Vert\int_{0}^{\tau_{1}}\left[(\tau_{2}-s)^{\alpha-1}-(\tau_{1}-s)^{\alpha-1}\right]
T_{\alpha}(\tau_{2}-s)\left[v(\delta(s))+B_{1}\mu_{1}(s)\right]ds\right\Vert_{q}\\
&\ +\left\Vert\int_{0}^{\tau_{1}}(\tau_{1}-s)^{\alpha-1}
\left[T_{\alpha}(\tau_{2}-s)-T_{\alpha}(\tau_{1}-s)\right]
\left[v(\delta(s))+B_{1}\mu_{1}(s)\right]ds\right\Vert_{q}\\
&\leq\left\Vert S_{\alpha}(\tau_{2})-S_{\alpha}(\tau_{1})\right\Vert_{q}
\left[\left\Vert B_{2}\right\Vert \left\Vert\mu_{2}(t)\right\Vert
+\Vert u_{0}\Vert_{q}+\Vert h(u(t))\Vert\right]\\
&\ +\int_{\tau_{1}}^{\tau_{2}}(\tau_{2}-s)^{\alpha-1}\Vert A^{q}
T_{\alpha}(\tau_{2}-s)\Vert\left[\Vert v(\delta(s))\Vert+\Vert B_{1}\Vert\Vert\mu_{1}(s)\Vert\right]ds\\
&\ +\int_{0}^{\tau_{1}}[(\tau_{2}-s)^{\alpha-1}-(\tau_{1}-s)^{\alpha-1}]\Vert
A^{q}T_{\alpha}(\tau_{2}-s)\Vert\left[\Vert v(\delta(s))\Vert+\Vert B_{1}\Vert\Vert\mu_{1}(s)\Vert\right]ds\\
&\ +\int_{0}^{\tau_{1}}(\tau_{1}-s)^{\alpha-1}\Vert A^{q}
\left[T_{\alpha}(\tau_{2}-s)-T_{\alpha}(\tau_{1}-s)\Vert\right]
\left[\Vert v(\delta(s))\Vert+\Vert B_{1}\Vert\Vert\mu_{1}(s)\Vert\right]ds\\
&\leq\Vert S_{\alpha}(\tau_{2})-S_{\alpha}(\tau_{1})\Vert_{q}
\left[\Vert B_{2}\Vert\Vert\mu_{2}(t)\Vert+\Vert u_{0}\Vert_{q}+\Vert h(u(t))\Vert\right]\\
&\ +\int_{\tau_{1}}^{\tau_{2}}(\tau_{2}-s)^{\alpha-1}\Vert
A^{q}T_{\alpha}(\tau_{2}-s)\Vert\left[\omega+\Vert B_{1}\Vert\Vert\mu_{1}(s)\Vert\right]ds\\
&\ +\int_{0}^{\tau_{1}-\epsilon}\left[(\tau_{2}-s)^{\alpha-1}-(\tau_{1}-s)^{\alpha-1}\right]\Vert
A^{q}T_{\alpha}(\tau_{2}-s)\Vert\left[\omega+\Vert B_{1}\Vert\Vert\mu_{1}(s)\Vert\right]ds\\
&\ +\int_{\tau_{1}-\epsilon}^{\tau_{1}}\left[(\tau_{2}-s)^{\alpha-1}-(\tau_{1}-s)^{\alpha-1}\right]\Vert
A^{q}T_{\alpha}(\tau_{2}-s)\Vert\left[\omega+\Vert B_{1}\Vert\Vert\mu_{1}(s)\Vert\right]ds\\
&\ +\int_{0}^{\tau_{1}-\epsilon}(\tau_{1}-s)^{\alpha-1}\Vert A^{q}
\left[T_{\alpha}(\tau_{2}-s)-T_{\alpha}(\tau_{1}-s)\Vert\right]
\left[\omega+\Vert B_{1}\Vert\Vert\mu_{1}(s)\Vert\right]ds\\
&\ +\int_{\tau_{1}-\epsilon}^{\tau_{1}}(\tau_{1}-s)^{\alpha-1}\Vert
A^{q}\left[T_{\alpha}(\tau_{2}-s)-T_{\alpha}(\tau_{1}-s)\Vert\right]
\left[\omega+\Vert B_{1}\Vert\Vert\mu_{1}(s)\Vert\right]ds
\end{split}
\end{equation*}
\begin{equation*}
\begin{split}
&\leq\Vert S_{\alpha}(\tau_{2})-S_{\alpha}(\tau_{1})\Vert_{q}
\left[\Vert B_{2}\Vert\Vert\mu_{2}(t)\Vert+\Vert u_{0}\Vert_{q}+\Vert h(u(t))\Vert\right]\\
&\ +\frac{\alpha M_{q}\Gamma(2-q)}{\Gamma(1+\alpha (1-q))}\Biggl[
\frac{(\tau_{2}-\tau_{1})^{\alpha(1-q)}}{\alpha(1-q)}
\left[\omega+\Vert B_{1}\Vert\Vert\mu_{1}\Vert\right]\\
&\ +\int_{0}^{\tau_{1}-\epsilon}\left[(\tau_{2}-s)^{\alpha-1}
-(\tau_{1}-s)^{\alpha-1}\right](\tau_{2}-s)^{-q\alpha}
\left[\omega+\Vert B_{1}\Vert\Vert\mu_{1}(s)\Vert\right]ds\\
&\ +\int_{\tau_{1}-\epsilon}^{\tau_{1}}\left[(\tau_{2}-s)^{\alpha-1}
-(\tau_{1}-s)^{\alpha-1}\right](\tau_{2}-s)^{-q\alpha}
\left[\omega+\Vert B_{1}\Vert\Vert\mu_{1}(s)\Vert\right]ds\\
&\ +\int_{0}^{\tau_{1}-\epsilon}(\tau_{1}-s)^{\alpha-1}
\left[(\tau_{2}-s)^{-q\alpha}-(\tau_{1}-s)^{-q\alpha}\right]
\left[\omega+\Vert B_{1}\Vert\Vert\mu_{1}(s)\Vert\right]ds\\
&\ +\int_{\tau_{1}-\epsilon}^{\tau_{1}}(\tau_{1}-s)^{\alpha-1}
\left[(\tau_{2}-s)^{-q\alpha}-(\tau_{1}-s)^{-q\alpha}\right]
\left[\omega+\Vert B_{1}\Vert\Vert\mu_{1}(s)\Vert\right]ds\Biggr].
\end{split}
\end{equation*}
In view of Lemma~\ref{Lemma:2.5}, $S_{\alpha}(\cdot)$ and
$T_{\alpha}(\cdot)$ are compact and strongly continuous operators,
which imply the continuity of those operators in the uniform operator topology on $(0, a]$.
Concluding, as $\tau_{2}-\tau_{1}\rightarrow 0$, with $\epsilon$ sufficiently small,
the right-hand side of the above inequality tends to zero independently of $u\in\Omega_{r}$.
This shows the equicontinuity of $F^{\lambda}_{2}$ on $\Omega_{r}$.

3. $(F^{\lambda}_{2}\Omega_{r})(t)=\lbrace z(t): z\in F^{\lambda}_{2}(\Omega_{r})\rbrace$
is relatively compact in $H_{q}$ for each $t\in J$.
Clearly, $(F^{\lambda}_{2}\Omega_{r})(t)$ is relatively compact in $H_{q}$ for $t=0$.
Let $0<t\leq a$ be fixed. For $u\in \Omega_{r}$ and $z\in (F^{\lambda}_{2})(u)$, there exists
a function $v\in S_{f,\mu_{1}}$ such that
$$
z(t)=S_{\alpha}(t)\left[B_{2}\mu_{2}(t)+u_{0}-h(u(t))\right]
+\int_{0}^{t}(t-s)^{\alpha-1}T_{\alpha}(t-s)\left[v(\delta(s))+B_{1}\mu_{1}(s)\right]ds.
$$
For $0<\gamma<1$, we have
\begin{equation*}
\begin{split}
\Vert A^{\gamma}z(t)\Vert
&\leq \Vert S_{\alpha}(t)\Vert\Vert A^{\gamma}\left[B_{2}\mu_{2}(t)+u_{0}-h(u(t))\right]\Vert\\
&\quad +\int_{0}^{t}(t-s)^{\alpha-1}\Vert A^{\gamma}T_{\alpha}(t-s)\Vert\Vert v(\delta(s))+B_{1}\mu_{1}(s)\Vert ds\\
&\leq \Vert S_{\alpha}(t)\Vert \Vert A^{\gamma}A^{-q}\left[ B_{2}\mu_{2}(t)+u_{0}-h(u(t))\Vert_{q}\right]\\
&\quad +\int_{0}^{t}(t-s)^{\alpha-1}\Vert A^{\gamma}A^{-q}T_{\alpha}(t-s)\Vert_{q}
\left[\Vert v(\delta(s))\Vert+\Vert B_{1}\Vert\Vert\mu_{1}(s)\Vert\right]ds\\
&\leq M\Vert A^{-q\gamma}\Vert\left[\Vert B_{2}\Vert\Vert\mu_{2}\Vert_{q}+\Vert u_{0}\Vert_{q}+k\right]\\
&\quad +\Vert A^{-q\gamma}\Vert\frac{a^{\alpha(1-q)}}{\alpha(1-q)}
\frac{\alpha M_{q}\Gamma(2-q)}{\Gamma(1+\alpha (1-q))}\left[\omega+\Vert B_{1}\Vert\Vert\mu_{1}\Vert\right].
\end{split}
\end{equation*}
By Remark~\ref{Remark:2.2}, $A^{-q\gamma}$ is bounded since $0<q\gamma<1$.
Clearly, $A^{\gamma}F^{\lambda}_{2}u(t)$ is bounded in $H$.
It is known from \cite{AMA.36} that $A^{-\gamma}: H\rightarrow H_{q}$ is compact
for $0\leq q<\gamma<1$. Then $(F^{\lambda}_{2}\Omega_{r})(t)$
is relatively compact in $H_{q}$ for each $t\in J$.
As a consequence of Step~4.1, together with the Arzela--Ascoli theorem,
we conclude that $F^{\lambda}_{2}$ is completely continuous.

4. $F^{\lambda}_{2}$ has a closed graph. From above we have that $F^{\lambda}_{2}(u)$
is a relatively compact and closed set for every $u\in \Omega_{r}$.
Hence $F^{\lambda}_{2}(u)$ is a compact set.
Let $u_{n}\rightarrow u_{*}$, $u_{n}\in \Omega_{r}$, $z_{n}\in F^{\lambda}_{2}(u_{n})$
and $z_{n}\rightarrow z_{*}$. We shall prove that $z_{*}\in F^{\lambda}_{2}(u_{*})$.
Note that $z_{n}\in F^{\lambda}_{2}(u_{n})$, which means that there exists
$v_{n}\in S_{f,\mu_{1,n}}$ such that
\begin{equation}
\label{eq:3.5}
z_{n}(t)=S_{\alpha}(t)\left[B_{2}\mu_{2,n}(t)+u_{0}-h(u_{n})\right]
+\int_{0}^{t}(t-s)^{\alpha-1}T_{\alpha}(t-s)\left[v_{n}(\delta(s))+B_{1}\mu_{1,n}(s)\right]ds
\end{equation}
for each $t\in J$, where
\begin{multline*}
\mu_{1,n}=B_{1}^{\ast}T_{\alpha}^{\ast}(a-t)\mathcal{R}\left(\lambda, \Gamma^{a}_{0,1}\right)
\biggl[u_{a}-S_{\alpha}(a)\left[u_{0}-h(u_{n})-g\left(0, u_{n}(\sigma(0))\right)\right]
-g\left(a, u_{n}(\sigma(a))\right)\\
-\int_{0}^{a}(a-s)^{\alpha-1}\lbrace AT_{\alpha}(a-s)g\left(s, u_{n}(\sigma(s))\right)
+T_{\alpha}(a-s)v_{n}(\delta(s))\rbrace ds\biggr],
\end{multline*}
\begin{multline*}
\mu_{2,n}=B_{2}^{\ast}S_{\alpha}^{\ast}(a)\mathcal{R}\left(\lambda, \Gamma^{a}_{0,2}\right)
\biggl[u_{a}-S_{\alpha}(a)\left[u_{0}
-h(u_{n})-g(0, u_{n}(\sigma(0)))\right]-g\left(a, u_{n}(\sigma(a))\right)\\
-\int_{0}^{a}(a-s)^{\alpha-1}\lbrace AT_{\alpha}(a-s)g\left(s, u_{n}(\sigma(s))\right)
+T_{\alpha}(a-s)v_{n}(\delta(s))\rbrace ds\biggr].
\end{multline*}
We prove the existence of $v_{*}\in S_{f,\mu_{1,*}}$ such that
$$
z_{*}(t)=S_{\alpha}(t)\left[B_{2}\mu_{2,*}(t)+u_{0}-h(u_{*})\right]
+\int_{0}^{t}(t-s)^{\alpha-1}T_{\alpha}(t-s)\left[v_{*}(\delta(s))+B_{1}\mu_{1,*}(s)\right]ds
$$
for each $t\in J$, where
\begin{multline*}
\mu_{1,*}=B_{1}^{\ast}T_{\alpha}^{\ast}(a-t)\mathcal{R}\left(\lambda, \Gamma^{a}_{0,1}\right)\biggl[
u_{a}-S_{\alpha}(a)\left[u_{0}-h(u_{*})
-g\left(0, u_{*}(\sigma(0))\right)\right]-g\left(a, u_{*}(\sigma(a))\right)\\
-\int_{0}^{a}(a-s)^{\alpha-1}\lbrace AT_{\alpha}(a-s)g\left(s, u_{*}(\sigma(s))\right)
+T_{\alpha}(a-s)v_{*}(\delta(s))\rbrace ds\biggr],
\end{multline*}
\begin{multline*}
\mu_{2,*}=B_{2}^{\ast}S_{\alpha}^{\ast}(a)\mathcal{R}(\lambda, \Gamma^{a}_{0,2})
\biggl[u_{a}-S_{\alpha}(a)\left[u_{0}-h(u_{*})-g\left(0, u_{*}(\sigma(0))\right)\right]
-g\left(a, u_{*}(\sigma(a))\right)\\
-\int_{0}^{a}(a-s)^{\alpha-1}\lbrace AT_{\alpha}(a-s)
g\left(s, u_{*}(\sigma(s))\right)+T_{\alpha}(a-s)v_{*}(\delta(s))\rbrace ds\biggr].
\end{multline*}
Consider the linear continuous operator $P: L^{1}(J^{\prime}, H)\rightarrow L^{2}(J^{\prime}, H)$
defined by
\begin{multline*}
v\rightarrow P(v)(t)=\int_{0}^{t}(t-s)^{\alpha-1}T_{\alpha}(t-s)\\
\times\left[v(\delta(s))-B_{1}B_{1}^{\ast}T_{\alpha}^{\ast}(a-s)\mathcal{R}(\lambda, \Gamma^{a}_{0,1})
\int_{0}^{a}(a-\eta)^{\alpha-1}T_{\alpha}(a-\eta)v(\delta(\eta))d\eta\right]ds.
\end{multline*}
By \eqref{eq:3.5} we get
\begin{multline*}
z_{n}(t)-S_{\alpha}(t)\left[B_{2}\mu_{2,n}(t)+u_{0}-h(u_{n})\right]
-\int_{0}^{t}(t-s)^{\alpha-1}T_{\alpha}(t-s)B_{1}B_{1}^{\ast}
T_{\alpha}^{\ast}(a-s)\mathcal{R}\left(\lambda, \Gamma^{a}_{0,1}\right)\\
\times\biggl\lbrace u_{a}-S_{\alpha}(a)\left[u_{0}-h(u_{n})-g(0, u_{n}(\sigma(0)))\right]
-g\left(a, u_{n}(\sigma(a))\right)\\
-\int_{0}^{a}(a-\eta)^{\alpha-1}AT_{\alpha}(a-\eta)g\left(\eta, u_{n}\left(\sigma(\eta)\right)\right)
d\eta\biggr\rbrace ds\in P\left(S_{f,\mu_{1,n}}\right),
\end{multline*}
which converges to
\begin{multline*}
z_{*}(t)-S_{\alpha}(t)\left[B_{2}\mu_{2,*}(t)+u_{0}-h(u_{*})\right]
-\int_{0}^{t}(t-s)^{\alpha-1}T_{\alpha}(t-s)B_{1}
B_{1}^{\ast}T_{\alpha}^{\ast}(a-s)\mathcal{R}\left(\lambda, \Gamma^{a}_{0,1}\right)\\
\times\biggl\lbrace u_{a}-S_{\alpha}(a)\left[u_{0}-h(u_{*})
-g\left(0, u_{*}(\sigma(0))\right)\right]-g\left(a, u_{*}(\sigma(a))\right)\\
-\int_{0}^{a}(a-\eta)^{\alpha-1}AT_{\alpha}(a-\eta)g\left(\eta, u_{*}(\sigma(\eta))\right)d\eta\biggr\rbrace ds
\end{multline*}
in $\Omega_{r}$, uniformly as $n\rightarrow \infty$.
From Proposition~\ref{Proposition:2.2}, it follows that $P\circ S_{f}$
is a closed graph operator. Hence we have that
\begin{multline*}
z_{*}(t)-S_{\alpha}(t)\left[B_{2}\mu_{2,*}(t)+u_{0}-h(u_{*})\right]
-\int_{0}^{t}(t-s)^{\alpha-1}T_{\alpha}(t-s)
B_{1}B_{1}^{\ast}T_{\alpha}^{\ast}(a-s)\mathcal{R}\left(\lambda, \Gamma^{a}_{0,1}\right)\\
\times\biggl\lbrace u_{a}-S_{\alpha}(a)\left[u_{0}-h(u_{*})-g\left(0, u_{*}(\sigma(0))\right)\right]
-g\left(a, u_{*}(\sigma(a))\right)\\
-\int_{0}^{a}(a-\eta)^{\alpha-1}AT_{\alpha}(a-\eta)g\left(\eta, u_{*}(\sigma(\eta))\right)
d\eta\biggr\rbrace ds\in P\left(S_{f,\mu_{1,*}}\right),
\end{multline*}
that is, there exists $v_{*}(t)\in S_{f,\mu_{1,*}}$ such that
\begin{equation*}
\begin{split}
P(v_{*}(t)) &= z_{*}(t)-S_{\alpha}(t)\left[B_{2}\mu_{2,*}(t)+u_{0}-h(u_{*})\right]\\
&\quad -\int_{0}^{t}(t-s)^{\alpha-1}T_{\alpha}(t-s)B_{1}B_{1}^{\ast}
T_{\alpha}^{\ast}(a-s)\mathcal{R}\left(\lambda, \Gamma^{a}_{0,1}\right)\\
&\quad \times\biggl\lbrace u_{a}-S_{\alpha}(a)\left[u_{0}-h(u_{*})-g(0, u_{*}(\sigma(0)))\right]
-g\left(a, u_{*}(\sigma(a))\right)\\
&\quad -\int_{0}^{a}(a-\eta)^{\alpha-1}AT_{\alpha}(a-\eta)
g\left(\eta, u_{*}(\sigma(\eta))\right)d\eta\biggr\rbrace ds\\
&=\int_{0}^{t}(t-s)^{\alpha-1}T_{\alpha}(t-s)\\
&\quad \times\left[v_{*}(\delta(s))-B_{1}B_{1}^{\ast}T_{\alpha}^{\ast}(a-s)\mathcal{R}\left(\lambda,
\Gamma^{a}_{0,1}\right) \int_{0}^{a}(a-\eta)^{\alpha-1}
T_{\alpha}(a-\eta)v_{*}\left(\delta(\eta)\right)d\eta\right]ds.
\end{split}
\end{equation*}
This shows that $z_{*}\in F^{\lambda}_{2}(u_{*})$. Therefore,
$F^{\lambda}_{2}$ has a closed graph and $F^{\lambda}_{2}$
is a completely continuous multi-valued map with compact value.
Thus, $F^{\lambda}_{2}$ is u.s.c. On the other hand, $F^{\lambda}_{1}$
is proved a contraction operator and hence $F^{\lambda}=F^{\lambda}_{1}+F^{\lambda}_{2}$
is u.s.c. and condensing. According to Lemma~\ref{Lemma:2.3}, we ensure the existence
of a fixed point $u^{\lambda}(\cdot)$ for $F^{\lambda}$ in $\Omega_{r}$.
\end{proof}

\begin{theorem}
\label{Theorem:3.2}
If (H$_1$)--(H$_4$) are satisfied and
$\lambda\mathcal{R}(\lambda, \Gamma^{a}_{0,i})\rightarrow 0$
in the strong operator topology as $\lambda\rightarrow 0^{+}$, $i=1,2$,
then the nonlocal-control fractional delay system \eqref{eq:1.1}--\eqref{eq:1.2}
is approximately controllable on $J$.
\end{theorem}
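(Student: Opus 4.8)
The plan is to use the resolvent (Mahmudov-type) technique: feed the family of fixed points supplied by Theorem~\ref{Theorem:3.1} into the terminal-value formula and let $\lambda\to 0^{+}$, exploiting the strong-operator hypothesis on $\lambda\mathcal{R}(\lambda,\Gamma^{a}_{0,i})$.

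First I would fix an arbitrary target $u_{a}\in H$. For each $\lambda>0$, Theorem~\ref{Theorem:3.1} furnishes a fixed point $u^{\lambda}\in\Omega_{r}$ of $F^{\lambda}$, together with a selection $v^{\lambda}\in S_{f,\mu_{1}^{\lambda}}$ and the controls $\mu_{1}^{\lambda},\mu_{2}^{\lambda}$ of \eqref{eq:3.3}. Since $u^{\lambda}$ solves \eqref{eq:2.2}, I would evaluate it at $t=a$ and insert the explicit controls \eqref{eq:3.3}--\eqref{eq:3.4}. The two structural operators of \eqref{eq:2.5}, namely $S_{\alpha}(a)B_{2}B_{2}^{\ast}S_{\alpha}^{\ast}(a)=\Gamma^{a}_{0,2}$ and $\int_{0}^{a}(a-s)^{\alpha-1}T_{\alpha}(a-s)B_{1}B_{1}^{\ast}T_{\alpha}^{\ast}(a-s)\,ds=\Gamma^{a}_{0,1}$, then emerge, and with the algebraic identity $\Gamma^{a}_{0,i}\mathcal{R}(\lambda,\Gamma^{a}_{0,i})=I-\lambda\mathcal{R}(\lambda,\Gamma^{a}_{0,i})$ I would reduce the gap to the canonical form
\[
u^{\lambda}(a)-u_{a}=-\lambda\,\mathcal{R}(\lambda,\Gamma^{a}_{0,i})\,P(u^{\lambda}),
\]
each of the two controls contributing a term of this type for $i=1$ and $i=2$, so that the distance from the reached state to the target is governed by $\lambda\mathcal{R}(\lambda,\Gamma^{a}_{0,i})$ applied to the quantity $P(u^{\lambda})$ defined in \eqref{eq:3.4}.

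Next I would show that $\{P(u^{\lambda})\}$ is relatively compact as $\lambda\to0^{+}$. By (H$_1$)--(H$_3$) the terms $g(0,\cdot)$, $g(a,\cdot)$, $h(u^{\lambda})$ and the Bochner integral $\int_{0}^{a}(a-s)^{\alpha-1}AT_{\alpha}(a-s)g(s,\cdot)\,ds$ are uniformly bounded, while (H$_2$)(ii) gives $\Vert v^{\lambda}\Vert\le\omega$; hence, exactly as in Step~3 of the previous proof, $\{v^{\lambda}\}$ is weakly compact in $L^{1}(J^{\prime},H)$ and, along a subsequence, $v^{\lambda}\stackrel{\star}{\longrightarrow}v$. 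Because $T_{\alpha}(t)$ is compact for $t>0$ (Lemma~\ref{Lemma:2.5}(c)), the integral term $\int_{0}^{a}(a-s)^{\alpha-1}T_{\alpha}(a-s)v^{\lambda}(\delta(s))\,ds$ converges strongly, and since $h$ is completely continuous (H$_3$) the corresponding term converges as well; thus $P(u^{\lambda})\to P$ in $H$ along this subsequence for some $P$.

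Finally I would close the argument by the splitting
\[
\Vert\lambda\mathcal{R}(\lambda,\Gamma^{a}_{0,i})P(u^{\lambda})\Vert
\le\Vert\lambda\mathcal{R}(\lambda,\Gamma^{a}_{0,i})(P(u^{\lambda})-P)\Vert
+\Vert\lambda\mathcal{R}(\lambda,\Gamma^{a}_{0,i})P\Vert.
\]
As $\Gamma^{a}_{0,i}$ is nonnegative self-adjoint, $\Vert\lambda\mathcal{R}(\lambda,\Gamma^{a}_{0,i})\Vert\le1$, so the first summand is dominated by $\Vert P(u^{\lambda})-P\Vert\to0$, while the second tends to $0$ by the hypothesis $\lambda\mathcal{R}(\lambda,\Gamma^{a}_{0,i})\to0$ in the strong operator topology. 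Hence $u^{\lambda}(a)\to u_{a}$, so $u_{a}\in\overline{\mathfrak{R}(a,u(0))}$; as $u_{a}$ was arbitrary, $\overline{\mathfrak{R}(a,u(0))}=H$ and Definition~\ref{Definition:2.5} yields approximate controllability. The main obstacle I anticipate is the compactness step for $P(u^{\lambda})$: one must legitimately extract a weakly convergent subsequence of the selections $v^{\lambda}$ and upgrade it to strong convergence of the integral terms through the compactness of $T_{\alpha}$, all while the dependence of $P$ on the $\lambda$-varying solution $u^{\lambda}$ is controlled only through the uniform bounds of (H$_1$)--(H$_4$).
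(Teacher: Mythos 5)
Your proposal follows essentially the same route as the paper's proof: evaluate the fixed point of $F^{\lambda}$ at $t=a$, use the resolvent identity $I-\Gamma^{a}_{0,i}\mathcal{R}(\lambda,\Gamma^{a}_{0,i})=\lambda\mathcal{R}(\lambda,\Gamma^{a}_{0,i})$ to express $u_{a}-u^{\lambda}(a)$ through $\lambda\mathcal{R}(\lambda,\Gamma^{a}_{0,i})P(u^{\lambda})$, establish relative compactness of $P(u^{\lambda})$ via weak compactness of the selections and compactness of the operators $S_{\alpha}$, $T_{\alpha}$, and conclude by the same two-term splitting using uniform boundedness of $\lambda\mathcal{R}(\lambda,\Gamma^{a}_{0,i})$ together with the strong-operator-topology hypothesis. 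This matches the paper's argument in both structure and the key analytic steps.
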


\begin{proof}
According to Theorem~\ref{Theorem:3.1}, $F^{\lambda}$ has a fixed point in $\Omega_{r}$
for any $\lambda\in (0, 1)$. This implies that there exists
$\overline{u}^{\lambda}\in F^{\lambda}(\overline{u}^{\lambda})$, that is, there is
$\overline{v}^{\lambda}\in S_{f,\overline{\mu_{1}}^{\lambda}}$ such that
\begin{multline*}
\overline{u}^{\lambda}(t)=S_{\alpha}(t)\left[
B_{2}\overline{\mu_{2}}^{\lambda}(t)+u_{0}-h\left(\overline{u}^{\lambda}\right)
-g\left(0, \overline{u}^{\lambda}(\sigma(0))\right)\right]
+g\left(t, \overline{u}^{\lambda}(\sigma(t))\right)\\
+\int_{0}^{t}(t-s)^{\alpha-1}\lbrace AT_{\alpha}(t-s)
g\left(s, \overline{u}^{\lambda}(\sigma(s))\right)
+T_{\alpha}(t-s)\left[\overline{v}^{\lambda}(\delta(s))
+B_{1}\overline{\mu_{1}}^{\lambda}(s)\right]\rbrace ds,
\end{multline*}
where
\begin{multline*}
\overline{\mu_{1}}^{\lambda}(t)=B_{1}^{\ast}
T_{\alpha}^{\ast}(a-t)\mathcal{R}\left(\lambda, \Gamma^{a}_{0,1}\right)
\Biggl[u_{a}-S_{\alpha}(a)\left[u_{0}-h(\overline{u}^{\lambda}(t))
-g\left(0, \overline{u}^{\lambda}(\sigma(0))\right)\right]
-g\left(t, \overline{u}^{\lambda}(\sigma(t))\right)\\
-\int_{0}^{a}(a-s)^{\alpha-1}\left\lbrace AT_{\alpha}(a-s)
g\left(s, \overline{u}^{\lambda}(\sigma(s))\right)
+T_{\alpha}(a-s)\overline{v}^{\lambda}(\delta(s))\right\rbrace ds\Biggr],
\end{multline*}
\begin{multline*}
\overline{\mu_{2}}^{\lambda}(t)
=B_{2}^{\ast}S_{\alpha}^{\ast}(a)\mathcal{R}\left(\lambda, \Gamma^{a}_{0,2}\right)
\Biggl[u_{a}-S_{\alpha}(a)\left[u_{0}-h(\overline{u}^{\lambda}(t))
-g\left(0, \overline{u}^{\lambda}(\sigma(0))\right)\right]
-g\left(t, \overline{u}^{\lambda}(\sigma(t))\right)\\
-\int_{0}^{a}(a-s)^{\alpha-1}\left\lbrace AT_{\alpha}(a-s)
g\left(s, \overline{u}^{\lambda}(\sigma(s))\right)
+T_{\alpha}(a-s)\overline{v}^{\lambda}(\delta(s))\right\rbrace ds\Biggr].
\end{multline*}
Now,
\begin{multline*}
\overline{u}^{\lambda}(a)
=S_{\alpha}(a)\left[B_{2}\overline{\mu_{2}}^{\lambda}(a)+u_{0}
-h\left(\overline{u}^{\lambda}\right)
-g\left(0, \overline{u}^{\lambda}(\sigma(0))\right)\right]
+g\left(a, \overline{u}^{\lambda}(\sigma(a))\right)\\
+\int_{0}^{a}(a-s)^{\alpha-1}\lbrace AT_{\alpha}(a-s)
g\left(s, \overline{u}^{\lambda}(\sigma(s))\right)
+T_{\alpha}(a-s)\left[\overline{v}^{\lambda}(\delta(s))
+B_{1}\overline{\mu_{1}}^{\lambda}(s)\right]\rbrace ds
\end{multline*}
and
\begin{equation*}
\begin{split}
u_{a}-\overline{u}^{\lambda}(a)
=u_{a}&-\Gamma^{a}_{0,2}\mathcal{R}\left(\lambda, \Gamma^{a}_{0,2}\right)\lbrace
u_{a}-S_{\alpha}(a)\left[u_{0}-h\left(\overline{u}^{\lambda}\right)
-g\left(0, \overline{u}^{\lambda}(\sigma(0))\right)\right]
-g\left(a, \overline{u}^{\lambda}(\sigma(a))\right)\\
&-\int_{0}^{a}(a-s)^{\alpha-1}\lbrace AT_{\alpha}(a-s)
g\left(s, \overline{u}^{\lambda}(\sigma(s))\right)
+T_{\alpha}(a-s)\overline{v}^{\lambda}(\delta(s))\rbrace ds\rbrace\\
&-S_{\alpha}(a)\left[u_{0}-h\left(\overline{u}^{\lambda}\right)
-g\left(0, \overline{u}^{\lambda}(\sigma(0))\right)\right]
-g\left(a, \overline{u}^{\lambda}(\sigma(a))\right)\\
&-\int_{0}^{a}(a-s)^{\alpha-1}\lbrace AT_{\alpha}(a-s)
g\left(s, \overline{u}^{\lambda}(\sigma(s))\right)
+T_{\alpha}(a-s)\overline{v}^{\lambda}(\delta(s))\rbrace ds,\\
&-\Gamma^{a}_{0,1}\mathcal{R}\left(\lambda, \Gamma^{a}_{0,1}\right)\lbrace u_{a}
-S_{\alpha}(a)\left[u_{0}-h\left(\overline{u}^{\lambda}\right)
-g\left(0, \overline{u}^{\lambda}(\sigma(0))\right)\right]
-g\left(a, \overline{u}^{\lambda}(\sigma(a))\right)\\
&-\int_{0}^{a}(a-s)^{\alpha-1}\lbrace AT_{\alpha}(a-s)
g\left(s, \overline{u}^{\lambda}(\sigma(s))\right)
+T_{\alpha}(a-s)\overline{v}^{\lambda}(\delta(s))\rbrace ds\rbrace.
\end{split}
\end{equation*}
From \eqref{eq:2.6} we have
$I-\Gamma^{a}_{0,i}\mathcal{R}\left(\lambda, \Gamma^{a}_{0,i}\right)
=\lambda\mathcal{R}\left(\lambda, \Gamma^{a}_{0,i}\right)$, $i=1,2$,
and we deduce that
\begin{multline}
\label{eq:3.6}
u_{a}-\overline{u}^{\lambda}(a)
=\lambda\left[\mathcal{R}\left(\lambda, \Gamma^{a}_{0,1}\right)
+\mathcal{R}\left(\lambda, \Gamma^{a}_{0,2}\right)\right]\Biggl\lbrace u_{a}
-S_{\alpha}(a)\left[u_{0}-h\left(\overline{u}^{\lambda}\right)
-g\left(0, \overline{u}^{\lambda}(\sigma(0))\right)\right]\\
-g\left(a, \overline{u}^{\lambda}(\sigma(a))\right)
-\int_{0}^{a}(a-s)^{\alpha-1}\left\lbrace AT_{\alpha}(a-s)
g\left(s, \overline{u}^{\lambda}(\sigma(s))\right)
+T_{\alpha}(a-s)\overline{v}^{\lambda}(\delta(s))\right\rbrace ds\Biggr\rbrace.
\end{multline}
According to the compactness of $S_{\alpha}(t)$, $t>0$,
and the uniform boundedness of $h$ and $g$, we see that there
is a subsequence of $S_{\alpha}(a)\left[h\left(\overline{u}^{\lambda}\right)
-g\left(0, \overline{u}^{\lambda}(\sigma(0))\right)\right]$
that converges to some $u_{1}$ as $\lambda\rightarrow 0^{+}$.
By assumption (H$_1$), we can choose a sufficiently small positive constant
$\epsilon>0$, $q+\epsilon<1$, such that
$A^{q+\epsilon}g(a, \overline{u}^{\lambda}(\sigma(a)))$ is bounded in $H$. Since
$g\left(a, \overline{u}^{\lambda}(\sigma(a))\right)
=A^{-(q+\epsilon)}A^{q+\epsilon}g\left(a, \overline{u}^{\lambda}(\sigma(a))\right)$,
$g\left(a, \overline{u}^{\lambda}(\sigma(a))\right)$ is clearly relatively compact
in $H_{q}$ and hence in $H$ ($A^{-(q+\epsilon)}: H\rightarrow H_{q}$ is compact).
It means that there is $u_{2}\in H$ such that
$g\left(a, \overline{u}^{\lambda}(\sigma(a))\right)\rightarrow u_{2}$
in $\Vert\cdot\Vert$ as $\lambda\rightarrow 0^{+}$ (here $g$ is a subsequence of itself).
On the other hand,
$$
\int_{0}^{a}(a-s)^{\alpha-1}AT_{\alpha}(a-s)
g\left(s, \overline{u}^{\lambda}(\sigma(s))\right)ds
=\int_{0}^{a}(a-s)^{\alpha-1}A^{1-q}
T_{\alpha}(a-s)A^{q} g\left(s, \overline{u}^{\lambda}(\sigma(s))\right)ds
$$
and, by (H$_1$),
$A^{q}g(s, \overline{u}^{\lambda}(\sigma(s)))\in L^{2}(J, H)$.
Then we can get a subsequence, still denoted by
$A^{q}g(s, \overline{u}^{\lambda}(\sigma(s)))$, which converges weakly
to some $g(s)\in L^{2}(J, H)$. Similarly as the proof of the compactness
of $F^{\lambda}_{2}$ in Theorem~\ref{Theorem:3.1}, it is easy to see that the mapping
$$
u(t)\rightarrow\int_{0}^{t}(t-s)^{\alpha-1}A^{1-q}T_{\alpha}(t-s)u(s)ds,
$$
from $L^{2}(J, H)$ to $C(J, H)$, is compact. Then,
$$
\int_{0}^{a}(a-s)^{\alpha-1}A^{1-q}T_{\alpha}(a-s)
\left[A^{q}g(s, \overline{u}^{\lambda}(\sigma(s)))-g(s)\right]ds \rightarrow 0
$$
as $\lambda\rightarrow 0^{+}$. Similarly,
$\overline{v}^{\lambda}(\delta(s))$ is uniformly bounded
in $L^{2}(J^{\prime}, H)$ and so converges weakly to some $v(s)\in L^{2}(J, H)$.
The mapping
$$
u(t)\rightarrow\int_{0}^{t}(t-s)^{\alpha-1}T_{\alpha}(t-s)u(s)ds
$$
is also compact on $L^{2}(J, H)$. It follows that
$$
\int_{0}^{a}(a-s)^{\alpha-1}T_{\alpha}(a-s)\left[\overline{v}^{\lambda}(s)-v(s)\right]ds
\rightarrow 0
$$
as $\lambda \rightarrow 0^{+}$. Using \eqref{eq:3.6}, we get
\begin{equation*}
\begin{split}
\left\Vert u_{a}-\overline{u}^{\lambda}(a)\right\Vert
&=\Biggl\Vert\lambda\left[\mathcal{R}\left(\lambda, \Gamma^{a}_{0,1}\right)
+\mathcal{R}\left(\lambda, \Gamma^{a}_{0,2}\right)\right]\Bigl\lbrace u_{a}
-S_{\alpha}(a)\left[u_{0}-h\left(\overline{u}^{\lambda}\right)
-g\left(0, \overline{u}^{\lambda}(\sigma(0))\right)\right]\\
&\quad -g\left(a, \overline{u}^{\lambda}(\sigma(a))\right)
+T_{\alpha}(a-s)\overline{v}^{\lambda}(\delta(s))\Bigr\rbrace\\
&\quad -\int_{0}^{a}(a-s)^{\alpha-1}\left\lbrace AT_{\alpha}(a-s)
g\left(s, \overline{u}^{\lambda}(\sigma(s))\right)\right\rbrace ds\Biggr\Vert\\
&\leq\Biggl\Vert\lambda\left[\mathcal{R}\left(\lambda, \Gamma^{a}_{0,1}\right)
+\mathcal{R}\left(\lambda, \Gamma^{a}_{0,2}\right)\right]\Bigl\lbrace u_{a}
-S_{\alpha}(a)\left[u_{0}-u_{1}\right]-u_{2}\\
&\quad -\int_{0}^{a}(a-s)^{\alpha-1}\lbrace A^{q} T_{\alpha}(a-s)g(s)
+T_{\alpha}(a-s)v(\delta(s))\rbrace ds\Bigr\rbrace\Biggr\Vert\\
&\quad +\left\Vert\lambda\left[\mathcal{R}\left(\lambda, \Gamma^{a}_{0,1}\right)
+\mathcal{R}\left(\lambda, \Gamma^{a}_{0,2}\right)\right]\right\Vert
\Biggl\lbrace \left\Vert S_{\alpha}(a)\left[h\left(\overline{u}^{\lambda}\right)
-g\left(0, \overline{u}^{\lambda}(\sigma(0))\right)-u_{1}\right]\right\Vert\\
&\quad +\Biggl\Vert\int_{0}^{a}(a-s)^{\alpha-1}\Bigl\lbrace A^{1-q}T_{\alpha}(a-s)
\left[A^{q}g(s, \overline{u}^{\lambda}(\sigma(s)))-g(s)\right]\\
&\quad +\Vert g\left(a, \overline{u}^{\lambda}(\sigma(a))\right)-u_{2}\Vert
+T_{\alpha}(a-s)\left[\overline{v}^{\lambda}(\delta(s))-v(s)\right]\Bigr\rbrace ds\Biggr\Vert\Biggr\rbrace.
\end{split}
\end{equation*}
Moreover, by the assumption that we have $\lambda\mathcal{R}(\lambda, \Gamma^{a}_{0,i})\rightarrow 0$
in the strong operator topology as $\lambda\rightarrow 0^{+}$, $i=1,2$,
we ensure that
$\Vert u_{a}-\overline{u}^{\lambda}(a)\Vert\rightarrow 0$
as $\lambda\rightarrow 0^{+}$. Therefore, the fractional dynamic inclusion
\eqref{eq:1.1}--\eqref{eq:1.2} is approximately controllable on $J$.
\end{proof}


\section{An Example}
\label{sec:4}

In this section, we apply Theorems~\ref{Theorem:3.1} and \ref{Theorem:3.2}
to the following fractional partial functional differential inclusion
with nonlocal control condition:
\begin{equation}
\label{eq:4.1}
\frac{\partial^{\alpha}}{\partial t^{\alpha}}\left[u(x, t)-x\arctan u(x, \sin t)\right]
\in \frac{\partial^{2}u(x,t)}{\partial x^{2}}+\int_{0}^{t}b(t, s)\exp \xi\left(x, \sin s\right)ds,
\end{equation}
\begin{equation}
\label{eq:4.2}
u(x, 0)-u_{0}(x)=\sum\limits_{k=1}^{m}c_{k}\left[\xi(x, t_{k})-u(x, t_{k})\right],
\quad x\in [0,\pi],
\end{equation}
\begin{equation}
\label{eq:4.3}
u(0,t)=u(\pi,t)=0, \quad t\in J,
\end{equation}
where $0<\alpha\leq 1$, $0< t_{1}<\cdots<t_{m}< a$, $c_{k}$, $k=1,\ldots, m$,
are given constants and the function $b(t, s)$ is continuous on $\Delta$.
Let us take the function $g(t, u(\cdot))=x\arctan u(x, \cdot)$,
the multivalued map $f(t, s, \cdot)=b(t, s)e^{\cdot}$, the nonlocal function
given by $h(u(\cdot, t))=\sum_{k=1}^{m}c_{k}u(\cdot, t_{k})$,
the control functions $\mu_{1}(t)=\mu_{2}(t)=\xi(\cdot, t)$, where
$\xi: [0,\pi]\times J\rightarrow [0,\pi]$ is continuous, and the delays
$\sigma(t)=\delta(t)=\sin t$. Assume that $H=L^{2}[0, \pi]$ and
define $A: H\rightarrow H$ by $Aw=w^{\prime\prime}$ with domain
$$
D(A)=\left\lbrace w\in H: w, w^{\prime} \text{ are absolutely continuous, }
w^{\prime\prime}\in H, w(0)=w(\pi)=0\right\rbrace
$$
dense in the Hilbert space $H$. Then,
$$
Aw=\sum\limits_{n=1}^{\infty}n^{2}\langle w,w_{n}\rangle w_{n}, \quad w\in D(A),
$$
where $\langle\cdot, \cdot\rangle$ is the inner product in $L^{2}[0, \pi]$.
It is well known that $A$ generates a strongly continuous semigroup
$\lbrace Q(t)$, $t\geq 0\rbrace$ on $H$, which is compact,
analytic, and self-adjoint. Furthermore, $A$ has a discrete spectrum
with eigenvalues $-n^{2}, n\in \mathbb{N}$,
and the corresponding normalized eigenfunctions
are given by $w_{n}(t)=\sqrt{\frac{2}{\pi}}\sin nx$, $0\leq x\leq\pi$,
with $\lbrace w_{n}: n\in\mathbb{N}\rbrace$ an orthonormal basis of $H$ and
$$
Q(t)w
=\sum\limits_{n=1}^{\infty}e^{-n^{2}t}\langle w, w_{n}\rangle w_{n}
$$
for all $t\geq0$ and $w\in H$. In particular, $Q(\cdot)$ is a uniformly
stable semigroup and $\Vert Q(t)\Vert_{L^{2}[0, \pi]}\leq e^{-t}$.
Also, for each $w\in H$, $A^{-\frac{1}{2}}w=\sum\limits_{n=1}^{\infty}\frac{1}{n}\langle w$,
$w_{n}\rangle w_{n}$ with $\Vert A^{-\frac{1}{2}}\Vert_{L^{2}[0,\pi]}=1$
and the operator $A^{\frac{1}{2}}$ is given
on the space $D(A^{\frac{1}{2}})=H_{\frac{1}{2}} :=\left\lbrace w\in H:
\sum_{n=1}^{\infty}n\langle w, w_{n}\rangle w_{n}\in H\right\rbrace$ by
$A^{\frac{1}{2}}w=\sum\limits_{n=1}^{\infty}n\langle w, w_{n}\rangle w_{n}$.
Now define the infinite-dimensional space $Y$ by
$$
Y := \left\lbrace z=\sum\limits_{n=2}^{\infty}z_{n}w_{n}(x) \left\vert
\sum\limits_{n=2}^{\infty}\right.z_{n}<\infty\right\rbrace\subset L^{2}[0, \pi],
$$
where the norm in $Y$ is defined by $\Vert z\Vert=\sqrt{\sum_{n=2}^{\infty}z_{n}^{2}}$.
The control is defined by a bounded linear operator
$B=B_{1}=B_{2}: Y\rightarrow L^{2}[0, \pi]$
from the control Hilbert space $Y$ by
$$
(B\mu)(x)=2\mu_{2}w_{1}(x)+\sum\limits_{n=2}^{\infty}\mu_{n}w_{n}(x)
=\xi(x, t),\quad
\text{ for } \ z=\sum\limits_{n=2}^{\infty}\mu_{n}w_{n}\in Y.
$$
Assume that there exists a function $v(x, \delta(t))=\omega (x, \sin t)$  such that
$$
\omega(x, \sin t)+\xi(x, t) \in \int_{0}^{t} b(t, s)\exp \xi\left(x, \sin s\right)ds.
$$
Thus, the integral equation \eqref{eq:2.2} is satisfied.
Therefore, problem \eqref{eq:4.1}--\eqref{eq:4.3} is
an abstract formulation of the control system \eqref{eq:1.1}--\eqref{eq:1.2}.
Moreover, all the assumptions (H$_1$)--(H$_4$) hold. Then, the
associated linear system of \eqref{eq:4.1}--\eqref{eq:4.3} is not
exactly controllable but, by Theorems~\ref{Theorem:3.1} and \ref{Theorem:3.2},
the control system \eqref{eq:4.1}--\eqref{eq:4.3} is approximately controllable on $J$.
Note that Lemma \ref{Lemma:2.6} holds.


\section{Conclusion}
\label{sec:conc}

We have studied approximate controllability for a class of fractional delay dynamic inclusions.
We introduced, for the first time in the literature, a nonlocal control condition by setting
a control function that depends on the nonlocal condition, and other control that depends
on the multi-valued map that appears, as usual, in the right hand side of the inclusion.
Sufficient conditions for approximate controllability are obtained. In particular,
our conditions are formulated in such a way that approximate controllability
of the nonlinear dynamical system is implied by the approximate controllability of its corresponding
linear part. More precisely, the controllability problem is transformed into a fixed point problem
for an appropriate nonlinear operator in a suitable function space. Using fractional calculations,
multi-valued analysis, and Sadovskii's fixed point theorem, we guarantee the existence of a
fixed point of this operator and study approximate controllability of the considered systems.
Finally, an example is provided to illustrate the applicability of the new results.

In order to describe various real-world problems in physical and engineering sciences subject
to abrupt changes at certain instants during the evolution process, impulsive fractional differential
equations have become important in recent years as mathematical models of many phenomena
in both physical and social sciences \cite{AMA.44}. In \cite{AMA.11}, Debbouche and Baleanu
establish a controllability result for a class of fractional evolution nonlocal impulsive quasilinear
delay integro-differential systems in a Banach space by using the theory
of fractional calculus and fixed point techniques. Upon making some appropriate assumptions
on system functions and by adapting the techniques and ideas established here with those of
\cite{AMA.11}, one can prove approximate controllability of nonlocal control fractional
delay dynamic inclusions with impulses.

Degenerate differential equations of integer order are often used to describe various
processes in science and engineering \cite{AMA.54}. As an open problem for further investigations,
we mention the study of approximate controllability for degenerate fractional dynamic (stochastic) inclusions.


\section*{Acknowledgments}

This work was supported by Portuguese funds through the
\emph{Center for Research and Development in Mathematics and Applications} (CIDMA),
and \emph{The Portuguese Foundation for Science and Technology} (FCT),
within project PEst-OE/MAT/UI4106/2014. Debbouche was also supported
by FCT within the post-doc project BPD/UA/CIDMA/2011 --- PD2012-MTSC.
The hospitality and the excellent working conditions at the University
of Aveiro and CIDMA are gratefully acknowledged. The authors would
like also to thank the reviewers for their valuable comments.


\small



\end{document}